\newtheorem{theorem}{Theorem}[section]
\newtheorem{proposition}[theorem]{Proposition}
\newtheorem{lemma}[theorem]{Lemma}
\theoremstyle{definition}
\newtheorem{example}[theorem]{Example}
\theoremstyle{remark}
\newtheorem{remark}[theorem]{Remark}
\DeclareMathOperator{\sign}{sign}
\numberwithin{equation}{section}
\title{On cell problems for Hamilton-Jacobi equations with non-coercive Hamiltonians and its application to homogenization problems}
\date{}
\author{
Nao Hamamuki\thanks{Department of Mathematics, Hokkaido University,
Kita 10, Nishi 8, Kita-Ku, Sapporo, Hokkaido, 060-0810, Japan, e-mail:hnao@math.sci.hokudai.ac.jp},
\and 
Atsushi Nakayasu$^{\dagger}$,
\and 
Tokinaga Namba\thanks{Graduate School of Mathematical Sciences, University of Tokyo, 3-8-1 Komaba, Meguro-ku, Tokyo, 153-8914, Japan, e-mail:ankys@ms.u-tokyo.ac.jp (Atsushi Nakayasu), namba@ms.u-tokyo.ac.jp (Tokinaga Namba)}}
\begin{document}

%\begin{frontmatter}
\maketitle

%% Title, authors and addresses

%% use the tnoteref command within \title for footnotes;
%% use the tnotetext command for theassociated footnote;
%% use the fnref command within \author or \address for footnotes;
%% use the fntext command for theassociated footnote;
%% use the corref command within \author for corresponding author footnotes;
%% use the cortext command for theassociated footnote;
%% use the ead command for the email address,
%% and the form \ead[url] for the home page:
%% \title{Title\tnoteref{label1}}
%% \tnotetext[label1]{}
%% \author{Name\corref{cor1}\fnref{label2}}
%% \ead{email address}
%% \ead[url]{home page}
%% \fntext[label2]{}
%% \cortext[cor1]{}
%% \address{Address\fnref{label3}}
%% \fntext[label3]{}

%% use optional labels to link authors explicitly to addresses:
%% \author[label1,label2]{}
%% \address[label1]{}
%% \address[label2]{}
%
%\author[WU]{Nao Hamamuki}
%\ead{n.hamamuki@kurenai.waseda.jp}
%\author[UT]{Atsushi Nakayasu}
%\ead{ankys@ms.u-tokyo.ac.jp}
%\author[UT]{Tokinaga Namba}
%\ead{namba@ms.u-tokyo.ac.jp}
%\address[WU]{Faculty of Education and Integrated Arts and Sciences, Waseda 
%University, 1-6-1 Nishi-Waseda, Shinjuku-ku, Tokyo, 169-8050, Japan}
%\address[UT]{Graduate School of Mathematical Sciences, University of Tokyo, 3-8-1 Komaba, Meguro-ku, Tokyo, 153-8914, Japan}

\begin{abstract}
%% Text of abstract
We study a cell problem arising in homogenization for a Hamilton-Jacobi equation whose Hamiltonian is not coercive.
We introduce a generalized notion of effective Hamiltonians by approximating the equation and characterize the solvability of the cell problem in terms of the generalized effective Hamiltonian. 
Under some sufficient conditions, the result is applied to the associated homogenization problem. 
We also show that homogenization for non-coercive equations fails in general.
\end{abstract}

\textbf{Keywords:} Cell Problem, Homogenization, Hamilton-Jacobi Equation, Non-coercive Hamiltonian, Viscosity Solution, Faceted Crystal Growth, Generalized Effective Hamiltonian, Solvability Set
%% keywords here, in the form: keyword \sep keyword

%% PACS codes here, in the form: \PACS code \sep code

%% MSC codes here, in the form: \MSC code \sep code
\textbf{2010 MSC:} 35F21, 35D40, 35B27, 49L25, 74N05
%% or \MSC[2008] code \sep code (2000 is the default)

%
%\begin{keyword}
%Cell Problem \sep Homogenization \sep Hamilton-Jacobi Equation \sep Non-coercive Hamiltonian \sep Viscosity Solution \sep Faceted Crystal Growth \sep Generalized Effective Hamiltonian \sep Solvability Set
%% keywords here, in the form: keyword \sep keyword

%% PACS codes here, in the form: \PACS code \sep code

%% MSC codes here, in the form: \MSC code \sep code
%\MSC[2010] 35F21 \sep 35D40 \sep 35B27 \sep 49L25 \sep 74N05
%% or \MSC[2008] code \sep code (2000 is the default)

%\end{keyword}

%\end{frontmatter}
%% \linenumbers

%% main text
%\tableofcontents
%---------------------------------------
%              introduction
%---------------------------------------
\section{Introduction}

We consider a Hamilton-Jacobi equation of the form
\begin{equation*}
\tag{CP}
\label{cp}
H(x,Du(x)+P)=a \quad \text{in $\mathbf{T}^{N}$}
\end{equation*}
and study a problem to find, for a given $P\in\mathbf{R}^{N}$,
a pair of a function $u \colon \mathbf{T}^{N} \to \mathbf{R}$ and a constant $a \in \mathbf{R}$ 
such that $u$ is a Lipschitz continuous viscosity solution of \eqref{cp}.
Here, $\mathbf{T}^{N}:=\mathbf{R}^{N}/\mathbf{Z}^{N}$ and a function $u$ on $\mathbf{T}^{N}$ is regarded as a function defined on $\mathbf{R}^{N}$ with $\mathbf{Z}^{N}$-periodicity,
i.e., $u(x+z)=u(x)$ for all $x\in\mathbf{R}^{N}$ and $z\in\mathbf{Z}^{N}$.
Moreover, $Du$ denotes the gradient, i.e., $Du=(\partial u/\partial x_{1},\cdots,\partial u/\partial x_{N})$.
This kind of problem is called a \emph{cell problem} in the theory of homogenization.
The constant $a$ satisfying \eqref{cp} is called a \emph{critical value}
if it is uniquely determined.

As a typical example in this paper, we consider the Hamiltonian $H:\mathbf{T}^{N}\times\mathbf{R}^{N}\to\mathbf{R}$ in \eqref{cp} given by 
\begin{equation}
\label{eq:ourH}
H(x,p)=\sigma(x)m(|p|),
\end{equation}
where $\sigma$ and $m$ satisfy
\begin{itemize}
\item[(H1)]
$\sigma \colon \mathbf{T}^{N}\to(0,\infty)$ is a continuous function,
\item[(H2)]
$m \colon [0,\infty)\to(0,1)$ is a Lipschitz continuous function,
\item[(H3)]
$m$ is strictly increasing and $m(r)\to1$ as $r\to\infty$.
\end{itemize}
Due to the boundedness of $m$,
our cell problem does not necessarily admit a solution $(u,a)$,
and the solvability depends on $P\in\mathbf{R}^{N}$.
One of goals in this paper is to characterize the set of $P\in\mathbf{R}^{N}$ such that the cell problem admits a solution.
The other goal is to apply the result to the associated homogenization problem.

A result for existence of a solution of cell problems for Hamilton-Jacobi equations
was first established by Lions, Papanicolaou and Varadhan \cite{LPV} under the assumption that the Hamiltonian is \emph{coercive}, i.e.,
\begin{equation}
\label{eq:coercive}
\lim_{r\to\infty}\inf\{H(x,p) \mid x\in\mathbf{T}^{N},p\in\mathbf{R}^{N},\ |p|\ge r\}=+\infty.
\end{equation}
Their method begins with considering the following approximate equation with a parameter $\delta>0$:
\begin{equation}
\label{eq:dcp}
\delta u_{\delta}(x)+H(x,Du_{\delta}(x)+P)=0 \quad \text{in $\mathbf{T}^{N}$}.
\end{equation} 
By a standard argument of viscosity solutions, it turns out that there exists a unique solution $u_{\delta}$ and that a family of functions $\{\delta u_{\delta}\}_{\delta>0}$ is uniformly bounded. Thus, (formally) $\{Du_{\delta}\}_{\delta>0}$ is uniformly bounded thanks to the coercivity. Therefore, by taking a subsequence if necessary, $\delta u_{\delta}$ and $u_{\delta}-\min u_{\delta}$ uniformly converge to a constant $-a$ and a function $u$ as $\delta\to0$, respectively.
A stability argument of viscosity solutions shows that $u$ and $a$ solve \eqref{cp}. For more details, see \cite{LPV} and \cite{E}.
We point out that the paper \cite{E} also studies second order uniformly elliptic equations by using a similar argument.

Unfortunately, our Hamiltonian \eqref{eq:ourH} is not coercive because of the boundedness of the function $m$.
When a Hamiltonian is not coercive, the method of \cite{LPV} becomes very delicate.
Cardaliaguet \cite{C} shows, in fact, that $\delta u_{\delta}$ may not converge to a constant; this result does not cover our setting.
We also refer the reader to \cite{AL} as a related work to \cite{C}.
Homogenization results with non-coercive Hamiltonians can be seen in \cite{BT, B, BW, CLS, CNS, Go, S}.
Hamiltonians with some partial coercivity is studied in \cite{BT},
and \cite{B} treats equations with $u/\varepsilon$-term.
The papers \cite{BW, S, Go} are concerned with homogenization on spaces with a (sub-Riemannian) geometrical condition.
The authors of \cite{CLS} study moving interfaces with a sign changing driving force term
while \cite{CNS} considers $G$-equations being possibly non-coercive.
Homogenization for degenerate second order equations has been developed by \cite{AB, CM}.
Our Hamiltonian \eqref{eq:ourH} has not been treated yet in the context of homogenization.

We now present our main results and briefly explain our approach for the non-coercive Hamilton-Jacobi equation \eqref{cp}.
Let us consider an approximate equation of the form
\begin{equation*}
\tag{CP$_{n}$}
\label{CPn}
H_{n}(x,Du_{n}(x)+P)=\bar{H}_{n}(P) \quad \text{in $\mathbf{T}^{N}$}
\end{equation*}
for each $n\in\mathbf{N}$.
Here $\{ H_{n} \} _{n \in \mathbf{N}}$ is a family of coercive Hamiltonians which approximate $H$.
For the detailed assumptions, see (A1)--(A4) in Section 3.
By the coercivity of $H_{n}$, the result of \cite{LPV} ensures that, for each $n\in\mathbf{N}$, the approximate equation has a solution $(u_n, \bar{H}_{n}(P))$ for every $P\in\mathbf{R}^{N}$.
The function $\bar{H}_{n}(\cdot)$ is called an \emph{effective Hamiltonian}, which appears in a limit equation in homogenization problems (see \cite{LPV}).
Our first main result is that, for each $P\in\mathbf{R}^{N}$, there exists a limit $\bar{H}_{\infty}(P)$ of $\bar{H}_{n}(P)$ as $n\to\infty$ and its value is independent of approximation (Theorem \ref{thm:1}).
In this paper we call $\bar{H}_{\infty}(\cdot)$ a \emph{generalized effective Hamiltonian},
which is defined on the whole of $\mathbf{R}^{N}$ even if \eqref{cp} is not solvable for some $P\in\mathbf{R}^{N}$.
We now define the \emph{solvability set} $\mathcal{D}$ as the set of $P\in\mathbf{R}^{N}$ such that \eqref{cp} admits a solution. Our second main result is a characterization of $\mathcal{D}$ in terms of the generalized effective Hamiltonian.
We prove that $\mathcal{D}=\{P\in\mathbf{R}^{N} \mid \bar{H}_{\infty}(P)<\underline{\sigma}\}$, where $\underline{\sigma}:=\min_{x\in\mathbf{T}^{N}}\sigma(x)$,
and that  $\bar{H}_{\infty}(P)$ is equal to the critical value of \eqref{cp} (Theorem \ref{thm:2}).
In the one-dimensional case, it turns out that $\mathcal{D}$ has a more explicit representation (Proposition \ref{onedim}).

We next present our homogenization results.
Let $u^{\varepsilon}$ be a viscosity solution of 
\begin{equation*}
\tag{HJ$_{\varepsilon}$}
\label{HJe}
\begin{cases}
\displaystyle
u_{t}^{\varepsilon}(x,t)+H\left(\frac{x}{\varepsilon},Du^{\varepsilon}(x,t)\right)=0 &\quad \text{in $\mathbf{R}^{N}\times(0,T)$},\\
u^{\varepsilon}(x,0)=u_{0}(x) &\quad \text{in $\mathbf{R}^{N}$}.
\end{cases}
\end{equation*}
Here, $\varepsilon>0$ is a parameter and $u_{0}:\mathbf{R}^{N}\to\mathbf{R}$ is a bounded and Lipschitz continuous initial datum. In our homogenization result (Theorem \ref{thm:hom}) we assume either
\begin{equation*}
(1)\quad \mathcal{D}=\mathbf{R}^{N} \quad \text{or} \quad (2)\quad m({\rm Lip}[u_{0}])<\underline{\sigma}/\overline{\sigma},
\end{equation*}
where $\overline{\sigma}:=\max_{x\in\mathbf{T}^{N}}\sigma(x)$ and ${\rm Lip}[u_{0}]$ stands for the Lipschitz constant of $u_{0}$. Then, we prove that $u^{\varepsilon}$ converges to the solution $u$ of the following problem locally uniformly in $\mathbf{R}^{N}\times[0,T)$ as $\varepsilon\to0$: 
\begin{equation*}
\tag{HJ}
\label{HJ}
\begin{cases}
u_{t}(x,t)+\bar{H}_{\infty}(Du(x,t))=0 &\quad \text{in $\mathbf{R}^{N}\times(0,T)$},\\
u(x,0)=u_{0}(x) &\quad \text{in $\mathbf{R}^{N}$}.
\end{cases}
\end{equation*}
The assumption (1) guarantees that the cell problem is solvable for every $P\in\mathbf{R}^{N}$. The proof is given by the half-relaxed limit method and the perturbed test function method provided by Evans \cite{E}. The assumption (2) is a sufficient condition that $\{u^{\varepsilon}\}_{\varepsilon>0}$ is equi-Lipschitz continuous. Since the cell problem may not have a solution for some $P\in\mathbf{R}^{N}$, we are not able to apply the perturbed test function method directly. We prove the homogenization result by reducing the original equation \eqref{cp} to the approximate equation \eqref{CPn} with a coercive Hamiltonian by using the equi-Lipschitz continuity of $\{u^{\varepsilon}\}_{\varepsilon>0}$.
We also show that, under the condition $\overline{\sigma}m(0)>\underline{\sigma}$, the solutions $u^{\varepsilon}$ do not converge to any function locally uniformly in $\mathbf{R}^{N}\times[0,T)$ as $\varepsilon\to0$ (Theorem \ref{non-hom}).

Our non-coercive Hamiltonian \eqref{eq:ourH} is originally derived by Yokoyama, Giga and Rybka \cite{YGR} to study the morphological stability of a faceted crystal.
Two functions $\sigma$ and $m$ represent the rate of supply of molecules and the dimensionless kinetic coefficient, respectively.
In \cite{GLM1} and \cite{GLM2} the authors study the large time behavior of a viscosity  solution of such non-coercive Hamilton-Jacobi equations.

We conclude this section with the physical explanation of the above homogenization problem and its result.
In this problem, we find an average growth of the faceted crystal with a (microscopic) heterogeneous supply of molecules.
As we will mention in Subsection 3.3, the cell problem does not have a solution under the condition $\overline{\sigma}m(0)\ge\underline{\sigma}$.
Thus, both the assumptions (1) and (2) imply $\overline{\sigma}m(0)<\underline{\sigma}$.
This inequality means that  the heterogeneity of the supply of molecules is somewhat small.
In this case the growth of the faceted crystal is described by \eqref{HJ} in view of Theorem \ref{thm:hom}.
We point out that the condition $\overline{\sigma}m(0)<\underline{\sigma}$ also appears in \cite{GLM2} to ensure the large time behaviour in the whole space.
On the other hand, if $\overline{\sigma}m(0)>\underline{\sigma}$, i.e., the heterogeneity is somewhat large, then the growth of the faceted crystal becomes complicated since homogenization fails (Theorem \ref{non-hom}).

In this paper, we show main theorems (Theorems \ref{thm:1} and \ref{thm:2}) under (H1)--(H3) for simplicity,
but it is possible to generalize a condition on a Hamiltonian and some generalizations are given as Remarks \ref{re:generalize} and \ref{re:generalize2}.
% We treat only a Hamiltonian of the form $H(x/\varepsilon,Du^{\varepsilon})$ as a typical case.
% Several authors however study a homogenization problem for a Hamiltonian depending on a macroscopic variable $x$ with coercivity.
% It is a future work to consider non-coercive Hamiltonians $H(x,x/\varepsilon,Du^{\varepsilon})$.

This paper is organized as follows.
Section 2 is devoted to preparation for the viscosity solutions and the critical values.
We study the cell problem in Section 3 and 4.
In Section 3, we present main theorems and prove them.
We also give a sufficient condition for $\mathcal{D}=\mathbf{R}^N$
and some properties of generalized effective Hamiltonians.
In Section 4, we focus on the one-dimensional cell problem and give a more explicit representation of $\mathcal{D}$.
Section 5 is concerned with an application to homogenization problems.
In Section 6 we extend the homogenization results for more general equations.

%--------------------------------------
%             Section 2--
%--------------------------------------
\section{Preliminaries}
In this section let $H:\mathbf{T}^{N}\times\mathbf{R}^{N}\to\mathbf{R}$ be a general continuous Hamiltonian.

Let ${\rm Lip}(\mathbf{T}^{N})$ denote the set of Lipschitz continuous functions on $\mathbf{T}^{N}$ and $\overline{B(x,r)}$ denote the closure of an open ball $B(x,r)$ of radius $r>0$ centered at a point $x$.

We consider Hamilton-Jacobi equations of the form
\begin{equation}
\label{eqhj}
H(x,Du(x))=0\quad \text{in $\mathbf{T}^{N}$}.
\end{equation}
In order to define viscosity solutions of \eqref{eqhj}, we recall notions of super- and subdifferentials. 
For a continuous function $u:\mathbf{T}^{N}\to\mathbf{R}$ and $x\in\mathbf{T}^{N}$, we set
\begin{equation*}
D^{+}u(x) := \Set{D\phi(x) | \phi\in C^{1}(\mathbf{T}^{N}),\quad \max_{\mathbf{T}^{N}}(u-\phi)=(u-\phi)(x)}.
\end{equation*}
We also define $D^- u(x)$ by replacing ``max'' by ``min'' in the above.

We call $u\in C(\mathbf{T}^{N})$ a \emph{viscosity subsolution} (resp.\ \emph{supersolution}) of \eqref{eqhj} if $H(\hat{x},p)\le0$ (resp.\ $H(\hat{x},p)\ge0$) for all $\hat{x}\in\mathbf{T}^{N}$ and $p\in D^{+}u(\hat{x})$ (resp.\ $p\in D^{-}u(\hat{x})$).
If $u\in C(\mathbf{T}^{N})$ is a viscosity sub- and supersolution of \eqref{eqhj},
we call it a \emph{viscosity solution} of \eqref{eqhj}.
The term ``viscosity'' is often omitted in this paper.

A pair of a function $u\in {\rm Lip}(\mathbf{T}^{N})$ and a constant $a\in\mathbf{R}$ satisfying \eqref{cp} is called a solution of \eqref{cp}. 
If such a constant $a$ is unique,
it is called the \emph{critical value} of \eqref{cp}. 
If there exists a critical value of the cell problem for every $P\in\mathbf{R}^{N}$, then we say that the cell problem is \emph{fully solvable}. 
When the cell problem is fully solvable, we are able to define a function $\bar{H}:\mathbf{R}^{N}\to\mathbf{R}$ by setting $\bar{H}(P)$ as the associated critical value. 
We call the function $\bar{H}$ an \emph{effective Hamiltonian} of $H$.

\begin{proposition}[Comparison principle for the cell problem]
\label{prop:cpcp}
Let $P\in\mathbf{R}^{N}$ and let $a, b \in \mathbf{R}$.
If there exist a subsolution $u \in {\rm Lip}(\mathbf{T}^{N})$ of \eqref{cp} and a supersolution $v \in {\rm Lip}(\mathbf{T}^{N})$ of $H(x,Dv(x)+P)=b$ in $\mathbf{T}^{N}$,
then $a \ge b$.
In particular, if $(u, c), (v, d) \in {\rm Lip}(\mathbf{T}^{N})\times\mathbf{R}$ are solutions of the cell problem \eqref{cp},
then $c = d$
and moreover
\begin{align*}
c
&= \inf\{ a \in \mathbf{R} \mid \text{there exists a subsolution of \eqref{cp}} \} \\
&= \sup\{ a \in \mathbf{R} \mid \text{there exists a supersolution of \eqref{cp}} \}.
\end{align*}
\end{proposition}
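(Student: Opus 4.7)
The plan is to argue by contradiction using the classical doubling-of-variables technique of Crandall--Ishii--Lions, adapted to the torus. Suppose $a < b$ and, viewing $u,v$ as $\mathbf{Z}^{N}$-periodic functions on $\mathbf{R}^{N}$, consider for small $\varepsilon > 0$
\[
\Phi_{\varepsilon}(x,y) = u(x) - v(y) - \frac{|x-y|^{2}}{2\varepsilon}.
\]
Since the penalty term forces $|x-y|$ to be small, periodicity lets me restrict to a fixed compact set and find a maximizer $(x_{\varepsilon},y_{\varepsilon})$. The usual test-function computation then yields
\[
p_{\varepsilon} := \frac{x_{\varepsilon}-y_{\varepsilon}}{\varepsilon} \in D^{+}u(x_{\varepsilon}) \cap D^{-}v(y_{\varepsilon}).
\]

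Next, I would exploit the Lipschitz regularity assumed in the statement. Let $L$ bound the Lipschitz constants of both $u$ and $v$. Comparing $\Phi_{\varepsilon}(x_{\varepsilon},y_{\varepsilon}) \ge \Phi_{\varepsilon}(y_{\varepsilon},y_{\varepsilon})$ gives
\[
\frac{|x_{\varepsilon}-y_{\varepsilon}|^{2}}{2\varepsilon} \le u(x_{\varepsilon}) - u(y_{\varepsilon}) \le L\,|x_{\varepsilon}-y_{\varepsilon}|,
\]
so $|p_{\varepsilon}| \le 2L$ and $|x_{\varepsilon}-y_{\varepsilon}| \to 0$. Along a subsequence $(x_{\varepsilon},y_{\varepsilon},p_{\varepsilon}) \to (x_{\ast},x_{\ast},p_{\ast})$. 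Applying the subsolution and supersolution properties yields
\[
H(x_{\varepsilon},p_{\varepsilon}+P) \le a, \qquad H(y_{\varepsilon},p_{\varepsilon}+P) \ge b,
\]
and the continuity of $H$ forces $a \ge H(x_{\ast},p_{\ast}+P) \ge b$, contradicting $a < b$. This is the heart of the proof; the main (and only) obstacle is really just bookkeeping, namely checking that the penalization approach works cleanly on $\mathbf{T}^{N}$ (either by choosing representatives in a bounded box using periodicity, or by using the geodesic distance) — no genuine difficulty from the non-coercive structure of $H$ appears, precisely because the sub- and supersolution are assumed Lipschitz.

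For the ``In particular'' part, everything follows formally. If $(u,c)$ and $(v,d)$ both solve \eqref{cp}, then $u$ is a subsolution of $H(x,Du+P)=c$ and $v$ is a supersolution of $H(x,Dv+P)=d$, so the first assertion gives $c \ge d$; swapping the roles gives $d \ge c$, hence $c = d$. For the variational characterization, note that the solution $u$ itself is a subsolution of $H(x,Du+P)=c$, so $c$ lies in $\{a \mid \text{subsolution exists}\}$; conversely for any such $a$, taking $v$ in the solution $(v,c)$ as a supersolution with constant $c$, the first assertion yields $a \ge c$. Therefore $c = \inf\{a \mid \text{subsolution exists}\}$, and the supremum formula is obtained by the symmetric argument.
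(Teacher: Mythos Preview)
Your argument is correct, but it takes a different route from the paper. The paper does not perform doubling of variables directly; instead it reduces to the standard comparison principle for the \emph{discounted} equation \eqref{eq:dcp}. Concretely, after translating so that $u>v$, the paper observes that if $a<b$ then for $\delta>0$ small enough
\[
\delta u + H(x,Du+P) \le \tfrac{a+b}{2} \le \delta v + H(x,Dv+P),
\]
and invokes the comparison principle for $\delta w + H(x,Dw+P)=\tfrac{a+b}{2}$ to conclude $u\le v$, a contradiction. Your approach is more self-contained: you effectively unroll the proof of that comparison principle in this special situation and use the Lipschitz hypothesis to bound $|p_\varepsilon|$ uniformly, so that continuity of $H$ on a fixed compact set suffices. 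The paper's approach is shorter on the page but relies on a black box whose proof would, in any case, require the same Lipschitz observation (as the paper itself notes just before the proof). Both arguments exploit Lipschitz regularity in exactly the same essential way, and your handling of the ``in particular'' clause is equivalent to the paper's, just organized in a different order.
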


The proof is based on the comparison principle for \eqref{eq:dcp}
with a small $\delta>0$; see \cite{LPV, E}.
Here we do not need an extra continuity assumption on $H$
since $u$ and $v$ are now Lipschitz continuous.

\begin{proof}
Since $u$ and $v$ is bounded, we may assume that $u > v$ by adding a positive constant to $u$ if necessary.
Suppose by contradiction that $a < b$,
i.e.,
\begin{equation*}
H(x, D u+P) \le a < b \le H(x, D v+P)
\end{equation*}
in the viscosity sense.
We then see that
\begin{equation*}
\delta u+H(x, D u+P)
\le \frac{a+b}{2}
\le \delta v+H(x, D v+P).
\end{equation*}
The comparison principle implies that $u \le v$, which contradicts to $u > v$.
Therefore, $a \ge b$.

This observation implies
\begin{align*}
&\inf\{ a \in \mathbf{R} \mid \text{there exists a subsolution of \eqref{cp}} \} =: \overline{c} \\
&\quad \ge \sup\{ a \in \mathbf{R} \mid \text{there exists a supersolution of \eqref{cp}} \} =: \underline{c}.
\end{align*}
We then see that $c = d = \overline{c} = \underline{c}$
since $\overline{c} \le c \le \underline{c}$ and $\overline{c} \le d \le \underline{c}$ by the definitions.
\end{proof}

\begin{lemma}[Estimates of the critical value]
\label{est}
Let $P\in\mathbf{R}^{N}$ and let $(u,c)\in{\rm Lip}(\mathbf{T}^{N})\times\mathbf{R}$ be a solution of the cell problem \eqref{cp}.
Then, we have 
\begin{equation*}
\left.\begin{array}{l}
	\displaystyle \sup_{\phi \in C^{1}(\mathbf{T}^{N})}\inf_{x \in \mathbf{T}^{N}}H(x, D\phi(x)+P) \\
	\displaystyle \sup_{x \in \mathbf{T}^{N}}\sup_{p \in D^{+}u(x)}H(x, p+P) \\
\end{array}\right\}
\le c \le
\left\{\begin{array}{l}
	\displaystyle \inf_{\phi \in C^{1}(\mathbf{T}^{N})}\sup_{x \in \mathbf{T}^{N}}H(x, D\phi(x)+P), \\
	\displaystyle \inf_{x \in \mathbf{T}^{N}}\inf_{p \in D^{-}u(x)}H(x, p+P). \\
\end{array}\right.
\end{equation*}
\end{lemma}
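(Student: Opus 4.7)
The plan is to observe that the four inequalities split naturally into two pairs: the two involving the sub-/superdifferentials of $u$ follow immediately from the definition of viscosity solution, and the two involving arbitrary $C^{1}$ test functions $\phi$ follow from the comparison-type statement in Proposition \ref{prop:cpcp}. I would handle them in that order.

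First, for the bounds $\sup_{x}\sup_{p\in D^{+}u(x)}H(x,p+P)\le c$ and $c\le\inf_{x}\inf_{p\in D^{-}u(x)}H(x,p+P)$, I would just unwind definitions. Since $u$ is a viscosity subsolution of \eqref{cp}, for every $x\in\mathbf{T}^{N}$ and every $p\in D^{+}u(x)$ we have $H(x,p+P)\le c$; taking the supremum over $x$ and over $p\in D^{+}u(x)$ gives the lower estimate. The upper estimate follows symmetrically from the supersolution property applied to $p\in D^{-}u(x)$.

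Next, for the bounds involving $\phi \in C^{1}(\mathbf{T}^{N})$, I would use Proposition \ref{prop:cpcp}. Fix $\phi \in C^{1}(\mathbf{T}^{N})$ and set $b_{\phi}:=\inf_{x}H(x,D\phi(x)+P)$ and $a_{\phi}:=\sup_{x}H(x,D\phi(x)+P)$. Since $\phi$ is $C^{1}$, the pointwise inequalities $H(x,D\phi(x)+P)\ge b_{\phi}$ and $H(x,D\phi(x)+P)\le a_{\phi}$ hold classically, so $\phi$ is a (Lipschitz) classical, hence viscosity, supersolution of $H(x,D\phi(x)+P)=b_{\phi}$ and subsolution of $H(x,D\phi(x)+P)=a_{\phi}$. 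Applying Proposition \ref{prop:cpcp} with $u$ (subsolution at level $c$) and $\phi$ (supersolution at level $b_{\phi}$) gives $c\ge b_{\phi}$, while applying it with $\phi$ (subsolution at level $a_{\phi}$) and $u$ (supersolution at level $c$) gives $a_{\phi}\ge c$. Taking the supremum in the first inequality and the infimum in the second over $\phi\in C^{1}(\mathbf{T}^{N})$ yields the remaining two bounds.

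There is no real obstacle here: the only point that needs a brief justification is that a $C^{1}$ function is automatically Lipschitz on the compact torus $\mathbf{T}^{N}$, so it is admissible in Proposition \ref{prop:cpcp}, and that the classical inequality $H(x,D\phi(x)+P)\le a_{\phi}$ (resp.\ $\ge b_{\phi}$) translates to the corresponding viscosity inequality since $D^{\pm}\phi(x)=\{D\phi(x)\}$ for $C^{1}$ functions. With these observations, all four estimates are immediate.
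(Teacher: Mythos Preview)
Your argument is correct. The two inequalities involving $D^{\pm}u$ are handled exactly as in the paper. For the two inequalities involving $\phi\in C^{1}(\mathbf{T}^{N})$, however, the paper takes a more elementary route than yours: instead of invoking Proposition~\ref{prop:cpcp}, it simply observes that $u-\phi$ attains a minimum at some $\hat{x}\in\mathbf{T}^{N}$ (by compactness), so $D\phi(\hat{x})\in D^{-}u(\hat{x})$ and the supersolution inequality gives $H(\hat{x},D\phi(\hat{x})+P)\ge c$, whence $\sup_{x}H(x,D\phi(x)+P)\ge c$; the other bound is symmetric using a maximum point. Your approach via Proposition~\ref{prop:cpcp} is legitimate but heavier, since that proposition itself rests on the comparison principle for the $\delta$-regularized problem; the paper's direct argument needs only the definition of viscosity solution and the compactness of $\mathbf{T}^{N}$.
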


\begin{proof}
The inequality $c \le \inf_{x \in \mathbf{T}^{N}}\inf_{p \in D^{-}u(x)}H(x, p+P)$ is trivial
since it is equivalent to the definition of a viscosity supersolution of \eqref{cp}.
Similarly, the inequality $\sup_{x \in \mathbf{T}^{N}}\sup_{p \in D^{+}u(x)}H(x, p+P) \le c$ holds
since it is equivalent to the definition of a viscosity subsolution of \eqref{cp}.

For a fixed $\phi \in C^{1}(\mathbf{T}^{N})$,
since $u-\phi$ is periodic and (Lipschitz) continuous,
we have $D\phi(\hat{x}) \in D^{-}u(\hat{x})$ at a minimum point $\hat{x}\in\mathbf{T}^{N}$ of $u-\phi$.
Thus
\begin{equation*}
\sup_{x \in \mathbf{T}^{N}}H(x, D\phi(x)+P)
\ge H(\hat{x}, D\phi(\hat{x})+P) \ge c,
\end{equation*}
which implies that $c \le \inf_{\phi \in C^{1}(\mathbf{T}^{N})}\sup_{x \in \mathbf{T}^{N}}H(x, D\phi(x)+P)$.
In a similar way, we see that $\sup_{\phi \in C^{1}(\mathbf{T}^{N})}\inf_{x \in \mathbf{T}^{N}}H(x, D\phi(x)+P) \le c$
by choosing a maximum point of $u-\phi$.
\end{proof}

\begin{remark}
It is worth to note that if the Hamiltonian $H = H(x, p)$ is convex in $p$ for each $x \in \mathbf{T}^{N}$ and satisfies the coercivity condition \eqref{eq:coercive},
then
\begin{equation*}
\inf_{\phi \in {\rm Lip}(\mathbf{T}^{N})}\sup_{x \in \mathbf{T}^{N}}\sup_{p \in D^{+}\phi(x)}H(x, p+P)
= \inf_{\phi \in C^{1}(\mathbf{T}^{N})}\sup_{x \in \mathbf{T}^{N}}H(x, D\phi(x)+P).
\end{equation*}
In particular, we have well-known formulas
\begin{align*}
c
&= \inf_{\phi \in C^{1}(\mathbf{T}^{N})}\sup_{x \in \mathbf{T}^{N}}H(x, D\phi(x)+P) \\
&= \inf_{\phi \in {\rm Lip}(\mathbf{T}^{N})}\sup_{x \in \mathbf{T}^{N}}\sup_{p \in D^{+}\phi(x)}H(x, p+P) \\
&= \sup_{x \in \mathbf{T}^{N}}\sup_{p \in D^{+}u(x)}H(x, p+P).
\end{align*}
We refer the reader to \cite{CIPP} or \cite[Subsection 4.2]{MT}
for details on such a kind of representation formulas of the critical value.
\end{remark}

We investigate the cell problem with a coercive Hamiltonian.

\begin{proposition}[\cite{LPV}]
\label{prop:ccp}
Assume \eqref{eq:coercive}.
Then, the cell problem \eqref{cp} is fully solvable.
\end{proposition}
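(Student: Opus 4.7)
The plan is to follow the classical Lions--Papanicolaou--Varadhan argument via the ``discounted'' approximation. Fix $P \in \mathbf{R}^N$ and consider, for $\delta > 0$, the equation
\begin{equation*}
\delta u_{\delta}(x) + H(x, Du_{\delta}(x)+P) = 0 \quad \text{in } \mathbf{T}^{N}.
\end{equation*}
Since $H(\cdot,\,\cdot+P)$ is continuous and coercive, Perron's method together with the comparison principle furnishes a unique continuous viscosity solution $u_{\delta}$. Using the constants $\pm C/\delta$ with $C := \max_{x \in \mathbf{T}^{N}} |H(x, P)|$ as sub/supersolutions, comparison immediately gives $\|\delta u_{\delta}\|_{\infty} \le C$, uniformly in $\delta$.

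Next I would establish a uniform Lipschitz estimate on $u_{\delta}$. The idea is that the equation rewrites as $H(x, Du_{\delta}+P) = -\delta u_{\delta}$, whose right-hand side is uniformly bounded in $\delta$; by the coercivity condition \eqref{eq:coercive}, this should force $|Du_{\delta}+P|$, and hence $|Du_{\delta}|$, to be bounded by a constant independent of $\delta$. To make this rigorous in the viscosity sense, I would use the doubling-variable trick: for a large candidate Lipschitz constant $L$, assume the function $\Phi(x, y) := u_{\delta}(x) - u_{\delta}(y) - L|x - y|$ attains a positive maximum at some $(\hat{x}, \hat{y})$ with $\hat{x} \ne \hat{y}$, then apply the sub- and supersolution inequalities at $\hat{x}$ and $\hat{y}$ with the test function $y \mapsto L|\hat{x}-y|$ (resp.\ $x \mapsto L|x-\hat{y}|$), and use coercivity at $L$ large enough to derive a contradiction. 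I expect this uniform Lipschitz bound to be the main technical obstacle, since it is the only place where coercivity is genuinely exploited.

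Once the family $\{u_{\delta}\}$ is equi-Lipschitz with constant $L$, the oscillation satisfies $\mathrm{osc}(u_{\delta}) \le L \cdot \mathrm{diam}(\mathbf{T}^{N})$ uniformly in $\delta$. Setting $v_{\delta} := u_{\delta} - \min_{\mathbf{T}^{N}} u_{\delta}$, the family $\{v_{\delta}\}$ is equi-bounded and equi-Lipschitz on the compact space $\mathbf{T}^{N}$, while the scalars $c_{\delta} := -\delta \min_{\mathbf{T}^{N}} u_{\delta}$ are bounded. Moreover $|\delta u_{\delta}(x) + c_{\delta}| = \delta(u_{\delta}(x) - \min u_{\delta}) \le \delta L \cdot \mathrm{diam}(\mathbf{T}^{N}) \to 0$ uniformly in $x$. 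By Arzel\`a--Ascoli, along a subsequence $\delta_{n} \downarrow 0$ I may assume $v_{\delta_{n}} \to u$ uniformly for some $u \in \mathrm{Lip}(\mathbf{T}^{N})$ and $c_{\delta_{n}} \to a$ for some $a \in \mathbf{R}$, and then $\delta_{n} u_{\delta_{n}} \to -a$ uniformly as well.

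Finally, rewriting the approximate equation as
\begin{equation*}
H(x, Dv_{\delta_{n}}(x)+P) = -\delta_{n} u_{\delta_{n}}(x),
\end{equation*}
the stability property of viscosity solutions under uniform convergence lets me pass to the limit and conclude that $(u, a)$ solves \eqref{cp}. Thus the cell problem admits a solution for every $P \in \mathbf{R}^{N}$, and the uniqueness of the constant $a$ is immediate from the comparison principle for the cell problem (Proposition \ref{prop:cpcp}). Hence the cell problem is fully solvable, as claimed.
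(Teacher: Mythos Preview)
Your proposal is correct and follows exactly the classical Lions--Papanicolaou--Varadhan argument that the paper itself sketches in the Introduction (the paper does not give its own proof of this proposition; it simply cites \cite{LPV}). The discounted approximation \eqref{eq:dcp}, the uniform bound on $\delta u_{\delta}$, the coercivity-driven Lipschitz bound, Arzel\`a--Ascoli, and stability are precisely the ingredients the paper lists, so there is nothing to add.
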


\begin{proposition}[Properties of the effective Hamiltonian]
\label{prop:eh}
Assume \eqref{eq:coercive}.
\begin{itemize}
\item[(1)]
If there exists $L > 0$ such that $|H(x, p)-H(x, q)| \le L|p-q|$ for all $x \in \mathbf{T}^N$, $p, q \in \mathbf{R}^N$,
then $\bar{H}$ satisfies $|\bar{H}(P)-\bar{H}(Q)| \le L|P-Q|$ for all $P, Q \in \mathbf{R}^N$.
\item[(2)]
If $H(x, p) \le H(x, k p)$ for all $x \in \mathbf{T}^N$, $p \in \mathbf{R}^N$ and $k \ge 1$,
then $\bar{H}(P) \le \bar{H}(k P)$ for all $P \in \mathbf{R}^{N}$ and $k \ge 1$.
\item[(3)]
If $H(x, p) = H(x, -p)$ for all $x \in \mathbf{T}^N$ and $p \in \mathbf{R}^N$,
then $\bar{H}(P) = \bar{H}(-P)$ for all $P \in \mathbf{R}^{N}$.
\end{itemize}
\end{proposition}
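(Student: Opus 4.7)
All three parts are obtained by applying the comparison principle for cell problems (Proposition~\ref{prop:cpcp}) to Lipschitz functions derived from a solution $(u,\bar{H}(P))$ of the cell problem at $P$. In each case I construct a candidate sub- or supersolution of the cell problem at a perturbed momentum and then compare it with a genuine solution there to deduce the desired inequality between effective Hamiltonians. For (1), I view the same $u$ as an approximate sub- and supersolution of the cell problem at $Q$: the Lipschitz estimate gives $|H(x,p+Q)-H(x,p+P)|\le L|P-Q|$ pointwise, so $u$ is a subsolution of the cell problem at $Q$ with value $\bar{H}(P)+L|P-Q|$ and a supersolution with value $\bar{H}(P)-L|P-Q|$. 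Applying Proposition~\ref{prop:cpcp} with a solution at $Q$ then yields $|\bar{H}(P)-\bar{H}(Q)|\le L|P-Q|$.

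For (2), I set $v:=ku$ with $k\ge 1$. At any local minimum $\hat{x}$ of $v-\phi$, the function $u-\phi/k$ also attains a minimum there, so by the supersolution property of $u$ at $P$, $H(\hat{x},D\phi(\hat{x})/k+P)\ge \bar{H}(P)$. Writing $p:=D\phi(\hat{x})/k+P$, the monotonicity hypothesis $H(x,p)\le H(x,kp)$ gives
\[
H(\hat{x},D\phi(\hat{x})+kP)=H(\hat{x},kp)\ge H(\hat{x},p)\ge \bar{H}(P),
\]
so $v$ is a viscosity supersolution of the cell problem at $kP$ with value $\bar{H}(P)$. Comparison (Proposition~\ref{prop:cpcp}) with a solution at $kP$ then yields $\bar{H}(kP)\ge \bar{H}(P)$.

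For (3), I consider $v:=-u$. A formal calculation using the symmetry $H(x,p)=H(x,-p)$ gives $H(x,Dv-P)=H(x,-(Du+P))=H(x,Du+P)=\bar{H}(P)$, suggesting that $v$ solves the cell problem at $-P$ with the same critical value. To make this rigorous in the viscosity sense, I use the correspondence that $v-\phi$ has a local extremum at $\hat{x}$ iff $u-(-\phi)$ has the opposite extremum there; combined with the evenness of $H$ in $p$, one translates the sub- and supersolution conditions for $u$ at $P$ into the corresponding viscosity conditions for $v$ at $-P$ with value $\bar{H}(P)$. Proposition~\ref{prop:cpcp} then yields $\bar{H}(-P)=\bar{H}(P)$. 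The main obstacle is the bookkeeping of viscosity test functions in (3): the sign change $u\to -u$ interchanges super- and subdifferentials (so that $D^{+}v=-D^{-}u$ and $D^{-}v=-D^{+}u$), and this swap must be carefully aligned with the symmetry of $H$ in $p$ to obtain the viscosity conditions in the correct direction.
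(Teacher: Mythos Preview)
Your proposal is correct and follows essentially the same approach as the paper: in each part you take a solution $(u,\bar{H}(P))$ of the cell problem at $P$, build from it a Lipschitz sub- or supersolution of the cell problem at the perturbed momentum ($Q$, $kP$, or $-P$), and invoke Proposition~\ref{prop:cpcp}. The paper's proof is a bit terser---it writes the viscosity inequalities formally (e.g., $H(x,D(ku)+kP)\ge H(x,Du+P)=\bar{H}(P)$) rather than unfolding test functions, and in (1) it derives only one inequality and leaves the symmetric one implicit---but the underlying ideas are identical.
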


\begin{proof}
(1)
Let $(u, \bar{H}(P))$ be a solution of \eqref{cp}. 
We observe
\begin{equation*}
H(x, D u+Q)-L|P-Q|
\le H(x, D u+P) = \bar{H}(P).
\end{equation*}
Thus, $u$ is a subsolution of
\begin{equation*}
H(x, D u+Q) = \bar{H}(P)+L|P-Q|.
\end{equation*}
By Proposition \ref{prop:cpcp}, we obtain that $\bar{H}(Q) \le \bar{H}(P)+L|P-Q|$.

(2)
Let $(u, \bar{H}(P))$ be a solution of \eqref{cp}.
We then see by the assumption that
\begin{equation*}
H(x, D(k u)+k P) \ge H(x, D u+P) = \bar{H}(P),
\end{equation*}
which means that $k u$ is a supersolution of $H(x, D v+k P) = \bar{H}(P)$.
Proposition \ref{prop:cpcp} implies $\bar{H}(k P) \ge \bar{H}(P)$.

(3)
Let $(u, \bar{H}(P))$ be a solution of \eqref{cp}. 
Then, since $H$ is even in the second variable, $(-u, \bar{H}(P))$ is a solution of 
\begin{equation*}
H(x, D v-P)=\bar{H}(P)\quad \text{in $\mathbf{T}^{N}$}.
\end{equation*}
Thus, we have $\bar{H}(P)=\bar{H}(-P)$.
\end{proof}

%--------------------------------------
%             Section 3
%--------------------------------------
\section{The cell problem}

From now on, we study a Hamiltonian $H$ of the form \eqref{eq:ourH} with (H1)--(H3).
Define
\begin{equation*}
\overline{\sigma}:=\sup_{x\in\mathbf{T}^{N}}\sigma(x),\quad\underline{\sigma}:=\inf_{x\in\mathbf{T}^{N}}\sigma(x),\quad m_{0}:=m(0).
\end{equation*}
We note that (H3) ensures $m_{0}=\min_{r\in[0,\infty)}m(r)$.

\subsection{Main results}

For each $n\in\mathbf{N}$ let $H_{n}:\mathbf{T}^{N}\times\mathbf{R}^{N}\to\mathbf{R}$ be an approximating Hamiltonian of $H$ such that
\begin{itemize}
\item[(A1)]
$H_{n}$ is continuous on $\mathbf{T}^{N}\times\mathbf{R}^{N}$,
\item[(A2)]
$H_{n}$ satisfies the coercivity condition \eqref{eq:coercive},
\item[(A3)]
$\displaystyle\liminf_{n\to\infty}\inf_{\mathbf{T}^{N}\times\overline{B(0,R)}}(H-H_{n}) \ge 0$ for all $R>0$,
\item[(A4)]
$\displaystyle\limsup_{n\to\infty}\sup_{\mathbf{T}^{N}\times\mathbf{R}^{N}}(H-H_{n}) \le 0$.
\end{itemize}
By (A1) and (A2), for each $n\in\mathbf{N}$, the approximation cell problem \eqref{CPn} is fully solvable as noted in Proposition \ref{prop:ccp}.
Let $\bar{H}_{n}(P)$ be the critical value of \eqref{CPn} for $P \in \mathbf{R}^{N}$. 
We define a solvability set $\mathcal{D}$ by
\begin{equation*}
\mathcal{D}:=\{P\in\mathbf{R}^{N} \mid \text{\eqref{cp} admits a solution $(u,c)\in {\rm Lip}(\mathbf{T}^{N})\times\mathbf{R}$}\}.
\end{equation*}
We are now in a position to state our main theorems.

\begin{theorem}[Convergence of $\bar{H}_{n}$]\label{thm:1}
There exists a unique function $\bar{H}_{\infty}:\mathbf{R}^{N} \to \mathbf{R}$ such that, for any sequence $\{H_{n}\}_{n\in\mathbf{N}}$ satisfying (A1)--(A4), the following conditions hold:
\begin{align} 
&\liminf_{n\to\infty}\inf_{\overline{B(0,R)}}
(\bar{H}_{\infty}-\bar{H}_{n}) \ge 0 \quad 
\text{for all $R>0$,} \label{eq:conv1} \\
&\limsup_{n\to\infty}\sup_{\mathbf{R}^{N}}
(\bar{H}_{\infty}-\bar{H}_{n}) \le 0. \label{eq:conv2}
\end{align}
\end{theorem}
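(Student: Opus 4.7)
The plan is to construct $\bar{H}_\infty$ as the pointwise limit of critical values for a specific monotone canonical approximation, verify that this limit satisfies \eqref{eq:conv1}--\eqref{eq:conv2} for any admissible approximation by viscosity comparison, and deduce uniqueness afterwards. As the canonical sequence I take $H_n^{(0)}(x,p) := H(x,p) + |p|/n$, which satisfies (A1)--(A4). Since $H_n^{(0)}$ is pointwise nonincreasing in $n$, Proposition \ref{prop:cpcp} makes $\bar{H}_n^{(0)}(P)$ nonincreasing in $n$; testing $v \equiv 0$ as a supersolution gives $\bar{H}_n^{(0)}(P) \ge \underline{\sigma}\,m(|P|) \ge \underline{\sigma}\,m_0$, so the pointwise limit $\bar{H}_\infty(P) := \lim_n \bar{H}_n^{(0)}(P)$ exists. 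The $n$-uniform Lipschitz constant $L := \overline{\sigma}\,{\rm Lip}(m)+1$ of $H_n^{(0)}(x,\cdot)$ makes $\{\bar{H}_n^{(0)}\}$ equi-Lipschitz by Proposition \ref{prop:eh}(1), hence $\bar{H}_\infty$ is $L$-Lipschitz and Dini's theorem upgrades the pointwise monotone convergence to uniform convergence on every $\overline{B(0,R)}$.

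To verify \eqref{eq:conv1} for a general $\{H_n\}$, fix $R,\epsilon>0$, $m\in\mathbf{N}$ and $P\in\overline{B(0,R)}$, and let $u_m^{(0)}$ solve the $H_m^{(0)}$-cell problem. The coercivity bound $|p|/m \le H_m^{(0)}(x,p)$ combined with the subsolution inequality and the a priori estimate $\bar{H}_m^{(0)}(P) \le \bar{H}_1^{(0)}(P) \le \overline{\sigma}+R$ forces $|D\phi+P| \le m(\overline{\sigma}+R) =: R_1$ at every subsolution test point of $u_m^{(0)}$. Since $R_1$ is independent of $P$, (A3) provides $N_1 = N_1(m,R,\epsilon)$ such that for $n \ge N_1$,
\[
H_n(x,D\phi+P) \le H(x,D\phi+P) + \epsilon \le H_m^{(0)}(x,D\phi+P) + \epsilon \le \bar{H}_m^{(0)}(P)+\epsilon,
\]
so $u_m^{(0)}$ is a subsolution of the $H_n$-cell problem at height $\bar{H}_m^{(0)}(P)+\epsilon$, and Proposition \ref{prop:cpcp} yields $\bar{H}_n(P) \le \bar{H}_m^{(0)}(P)+\epsilon$ uniformly in $P\in\overline{B(0,R)}$. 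Choosing $m$ large via the uniform convergence of the first paragraph and letting $n\to\infty$ gives \eqref{eq:conv1}.

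To verify \eqref{eq:conv2}, use (A4) to pick $N_2(\epsilon)$ with $H \le H_n+\epsilon$ globally on $\mathbf{T}^N\times\mathbf{R}^N$ for every $n \ge N_2$. For such $n$ and any $P\in\mathbf{R}^N$, let $u_n$ solve the $H_n$-cell problem; coercivity of $H_n$ yields a finite $R_n = R_n(n,P)$ with $H_n(x,p)\le\bar{H}_n(P) \Rightarrow |p|\le R_n$, so in particular $|D\phi+P|\le R_n$ at every subsolution test point of $u_n$. Hence for each $m\in\mathbf{N}$,
\[
H_m^{(0)}(x,D\phi+P) = H(x,D\phi+P) + \frac{|D\phi+P|}{m} \le H_n(x,D\phi+P) + \epsilon + \frac{R_n}{m} \le \bar{H}_n(P) + \epsilon + \frac{R_n}{m},
\]
so $u_n$ is a subsolution of the $H_m^{(0)}$-cell problem at that height. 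Proposition \ref{prop:cpcp} combined with $\bar{H}_\infty(P) \le \bar{H}_m^{(0)}(P)$ gives
\[
\bar{H}_\infty(P) \le \bar{H}_n(P) + \epsilon + \frac{R_n}{m}.
\]
Fixing $n$ and $P$ and sending $m\to\infty$ kills the error, so $\bar{H}_n(P) \ge \bar{H}_\infty(P)-\epsilon$ for every $P$ simultaneously as soon as $n\ge N_2(\epsilon)$, which is \eqref{eq:conv2}.

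Uniqueness follows from the two conditions themselves: if $\bar{H}_\infty^{(1)},\bar{H}_\infty^{(2)}$ both satisfy \eqref{eq:conv1}--\eqref{eq:conv2} for every admissible sequence, pick any single such $\{H_n\}$, then \eqref{eq:conv2} for $\bar{H}_\infty^{(1)}$ and \eqref{eq:conv1} for $\bar{H}_\infty^{(2)}$ on $\overline{B(0,|P|+1)}$ give $\bar{H}_\infty^{(1)}(P) \le \bar{H}_n(P)+\epsilon \le \bar{H}_\infty^{(2)}(P)+2\epsilon$ for large $n$, and letting $\epsilon\to 0$ with symmetry forces equality. The main obstacle throughout is the $P$-uniform lower bound of the third paragraph, since the coercivity radius $R_n(P)$ of a general $H_n$ is not controlled uniformly in $P$. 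The decisive asymmetry is that (A4) is global while (A3) is only local: globality of (A4) makes the choice of $n$ independent of $P$, and once $n$ is fixed the regularization $|p|/m$ in the canonical sequence absorbs the uncontrolled error $R_n/m$ by sending $m\to\infty$, with no need for any explicit estimate on $R_n$.
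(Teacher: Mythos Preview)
Your proof is correct and takes a different organizational route from the paper. The paper first proves, for \emph{any} admissible $\{H_n\}$, that $\bar{H}_n(P)$ converges pointwise (by comparing two indices of the same sequence via (A3)--(A4)) and that the limit is independent of the sequence (via an interleaving trick: merge two sequences into one); only afterwards does it pass to a monotone approximation to upgrade to local uniform convergence via Dini, and finally derives \eqref{eq:conv1} for a general $\{H_n\}$ by comparing with the auxiliary monotone sequence $H_n' := \sup_{k\ge n} H_k$, whose continuity requires an Ascoli--Arzel\`a argument. You instead fix the canonical monotone sequence $H_n^{(0)} = H + |p|/n$ from the outset, define $\bar{H}_\infty$ as its pointwise limit, and then verify \eqref{eq:conv1}--\eqref{eq:conv2} for a general $\{H_n\}$ by comparing directly with $H_m^{(0)}$. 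This buys you two simplifications: the Ascoli--Arzel\`a step disappears, and the interleaving trick is replaced by the short formal uniqueness argument at the end. Your treatment of \eqref{eq:conv2} is especially clean: the explicit additive term $|p|/m$ in $H_m^{(0)}$ absorbs the uncontrolled gradient radius $R_n(P)$ as $m \to \infty$, so no estimate on $R_n$ is ever needed. The paper's approach, by contrast, establishes directly that \emph{every} admissible sequence converges pointwise (not merely that a limit satisfying \eqref{eq:conv1}--\eqref{eq:conv2} exists), which is marginally more informative. One cosmetic point: you use $m$ both for the function in (H2)--(H3) and for the index of the canonical sequence (e.g.\ in ``$m(\overline{\sigma}+R)$''), which is harmless but worth renaming.
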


We call the function $\bar{H}_{\infty}$ 
a \emph{generalized effective Hamiltonian} of $H$.

\begin{theorem}[Characterization of the solvability set]\label{thm:2}
We have $\mathcal{D}=\{P\in\mathbf{R}^{N} \mid \bar{H}_{\infty}(P)<\underline{\sigma}\}$.
Moreover, if $P\in\mathcal{D}$, the critical value of \eqref{cp} is equal to $\bar{H}_{\infty}(P)$.
\end{theorem}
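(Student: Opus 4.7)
The plan is to prove the two inclusions composing Theorem \ref{thm:2}, together with the identification $c = \bar{H}_\infty(P)$, by combining Proposition \ref{prop:cpcp}, Theorem \ref{thm:1}, and a quadratic penalization localized at a minimizer of $\sigma$.

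As a preliminary step, I would show that any solution $(u,c)$ of \eqref{cp} satisfies $\bar{H}_n(P) \to c$. With $L := \mathrm{Lip}(u)$, every element of $D^{\pm}u(x)$ lies in $\overline{B(0,L)}$, so (A3) applied on the bounded set $\mathbf{T}^N \times \overline{B(0,L+|P|)}$ combined with the subsolution property of $u$ turns $u$ into a subsolution of $H_n(x, Du + P) = c + \epsilon_n$ for some $\epsilon_n \to 0$; Proposition \ref{prop:cpcp} then gives $\bar{H}_n(P) \le c + \epsilon_n$. Symmetrically, (A4), which is global, makes $u$ a supersolution at level $c - \epsilon_n'$, so $\bar{H}_n(P) \ge c - \epsilon_n'$ and hence $\bar{H}_n(P) \to c$. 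Since Theorem \ref{thm:1} yields $\bar{H}_n(P) \to \bar{H}_\infty(P)$, we obtain $c = \bar{H}_\infty(P)$.

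For the inclusion $\mathcal{D} \subseteq \{P \mid \bar{H}_\infty(P) < \underline{\sigma}\}$, I fix a minimizer $x_0 \in \mathbf{T}^N$ of $\sigma$ and the penalization $\Phi_M(x) := u(x) + M|x - x_0|^2$ on $\mathbf{R}^N$, using the $\mathbf{Z}^N$-periodic extension of $u$. Since $u$ is bounded, $\Phi_M$ attains its infimum at some $\hat{x}_M$, and from $u(\hat{x}_M) + M|\hat{x}_M - x_0|^2 \le u(x_0)$ with the $L$-Lipschitz bound one gets $|\hat{x}_M - x_0| \le L/M$; therefore $\hat{x}_M \to x_0$, while $q_M := -2M(\hat{x}_M - x_0) \in D^- u(\hat{x}_M)$ satisfies $|q_M| \le 2L$. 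The supersolution property at $\hat{x}_M$ reads $\sigma(\hat{x}_M)\, m(|q_M + P|) \ge c$. Passing to a subsequence with $q_M + P \to q$ inside $\overline{B(0, 2L + |P|)}$ and using continuity of $\sigma$ and $m$, together with $m(|q|) < 1$, we conclude $c \le \underline{\sigma}\, m(|q|) < \underline{\sigma}$.

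For the reverse inclusion, suppose $\bar{H}_\infty(P) < \underline{\sigma}$ and pick any approximating sequence (for example $H_n(x,p) := H(x,p) + |p|/n$, which clearly satisfies (A1)--(A4)); let $u_n$ solve \eqref{CPn}. For any $p \in D^+ u_n(x)$, the subsolution property with (A4) yields $\sigma(x)\, m(|p+P|) \le \bar{H}_n(P) + \epsilon_n'$; since $\bar{H}_n(P) \to \bar{H}_\infty(P) < \underline{\sigma}$ and $\sigma \ge \underline{\sigma}$, there exists $\delta > 0$ with $m(|p+P|) \le 1 - \delta$ for all $n$ large. By (H3), this bounds $|p|$ uniformly in $n$ and $x$, so $\{u_n - \min u_n\}$ is equi-Lipschitz and precompact in $C(\mathbf{T}^N)$. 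The standard stability argument for viscosity solutions, using (A4) on the subsolution side and (A3) on the supersolution side (both compatible with uniformly bounded test-function gradients and the continuity of $H$), shows that any uniform limit $u$ is a Lipschitz solution of $H(x, Du + P) = \bar{H}_\infty(P)$, proving $P \in \mathcal{D}$. The main obstacle is the estimate $c < \underline{\sigma}$ above: testing the supersolution inequality with zero gradient at $x_0$ directly would require $0 \in D^- u(x_0)$, which generally fails since $x_0$ need not be a minimum of $u$. The penalization circumvents this by producing a nearby point where $D^- u$ is non-empty, and the Lipschitz continuity of $u$ is crucial for keeping the induced gradient bounded, so that the strict inequality $m < 1$ can be exploited.
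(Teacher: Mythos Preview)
Your proof is correct but departs from the paper's in several places. For the inequality $c<\underline{\sigma}$, the paper (Proposition \ref{prop:est2}) invokes Lemma \ref{est} together with the density of $A^{-}=\{x\mid D^{-}u(x)\neq\emptyset\}$ from \cite[Lemma 1.8(d)]{BC}; your quadratic penalization at a minimizer of $\sigma$ achieves the same conclusion in a self-contained way and makes the role of Lipschitz continuity of $u$ more transparent. For the identification $c=\bar H_{\infty}(P)$, the paper exploits the special approximation (B1)--(B4) of Remark \ref{re:approx}, under which $(u,c)$ is an \emph{exact} solution of \eqref{CPn} for all large $n$, so that $\bar H_{n}(P)=c$ identically; your argument via (A3)--(A4) and Proposition \ref{prop:cpcp} works for any admissible approximation but only yields $\bar H_{n}(P)\to c$. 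For the inclusion $\widehat{\mathcal{D}}\subset\mathcal{D}$, the paper introduces an exhaustion $\mathcal{D}_{\ell}=\{P\mid \bar H_{\ell}(P)\le\underline{\sigma}m(\alpha_{\ell})\}$ tied to the cut-off levels $\alpha_{\ell}$ in (B3), obtaining the explicit bound ${\rm Lip}[u_{n}]\le\alpha_{\ell}+|P|$; your route via $m(|p+P|)\le 1-\delta$ and (H3), using the simple coercification $H_{n}=H+|p|/n$, is more direct and dispenses with the structured approximation. The paper's choice buys the sharper conclusion recorded in the Remark after the proof, namely $\bar H_{n}(P)=\bar H_{\infty}(P)$ for all $n\ge N_{R}$ on bounded sets; your approach buys independence from any particular approximation and avoids the external density lemma.
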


\subsection{The proof of pointwise convergence}

The proof of Theorem \ref{thm:1} consists of four steps.
We first prove in Step 1 that 
$\{\bar{H}_{n}(P)\}_{n\in\mathbf{N}}$ is a convergent sequence
for every $P\in\mathbf{R}^{N}$.
Then it is shown in Step 2 that
the limit is unique no matter how $\{H_{n}\}_{n\in\mathbf{N}}$ satisfying (A1)--(A4) is chosen.
In Step 3 we prove that the convergence is locally uniform 
when $\{H_{n}\}_{n\in\mathbf{N}}$ is monotone,
and finally, in Step 4, 
we derive \eqref{eq:conv1} and \eqref{eq:conv2} for a general approximation.
In this section, we shall show the first two steps.

Step 1.
Fix any $P\in\mathbf{R}^{N}$ and let $(u_{n},\bar{H}_{n}(P)) \in {\rm Lip}(\mathbf{T}^{N})\times\mathbf{R}$ be a solution of \eqref{CPn} for each $n\in\mathbf{N}$. We first show that $\{\bar{H}_{n}(P)\}_{n\in\mathbf{N}}$ is bounded from below. 
Indeed, taking a maximum point $x_{n}\in\mathbf{T}^{N}$ of $u_{n}$, we have
\begin{equation*}
H_{n}(x_{n},P)\le\bar{H}_{n}(P).
\end{equation*}
Since $H_{n}$ uniformly converges to $H$ on $\mathbf{T}^{N}\times\overline{B(0,|P|)}$, we see that
\begin{equation*}
H(x_{n},P)-1\le\bar{H}_{n}(P)
\end{equation*}
for sufficiently large $n$. 
Thus
\begin{equation*}
\left(\inf_{x\in\mathbf{T}^{N}}H(x,P)\right)-1\le\bar{H}_{n}(P),
\end{equation*}
which implies $\{\bar{H}_{n}(P)\}_{n\in\mathbf{N}}$ is bounded from below.

Fix $\varepsilon>0$. By (A4) there exists some $K\in\mathbf{N}$ such that 
\begin{equation}\label{eq1}
H-\frac{\varepsilon}{2}\le H_{n}\quad\text{on $\mathbf{T}^{N}\times\mathbf{R}^{N}$}
\end{equation}
for all $n\ge K$. Fix an arbitrary $n\ge K$. Recall that $u_{n}$ is a Lipschitz continuous function and set $L_{n}=|P|+{\rm Lip}[u_{n}]$. Then, it follows from (A3) that there exists some $M\ge n$ such that
\begin{equation}\label{eq2}
H_{m}-\frac{\varepsilon}{2}\le H\quad\text{on $\mathbf{T}^{N}\times\overline{B(0,L_{n})}$}
\end{equation}
for all $m\ge M$.
Combining \eqref{eq1} and \eqref{eq2}, we see that $u_{n}$ is a subsolution of
\begin{equation*}
H_{m}(x, D w+P)=\bar{H}_{n}(P)+\varepsilon\quad\text{in $\mathbf{T}^{N}$}.
\end{equation*}
By Proposition \ref{prop:cpcp}, we have
\begin{equation}\label{eq3}
\bar{H}_{m}(P)\le\bar{H}_{n}(P)+\varepsilon
\end{equation}
for all $m\ge M$. 
This inequality implies that $\{\bar{H}_{n}(P)\}_{n\in\mathbf{N}}$ is bounded from above. 
By taking $\limsup_{m\to\infty}$ and $\liminf_{n\to\infty}$, where $\limsup_{m\to\infty}$ should be operated first since $M$ depends on $n$, we have
\begin{equation*}
\limsup_{m\to\infty}\bar{H}_{m}\le\liminf_{n\to\infty}\bar{H}_{n}+\varepsilon.
\end{equation*}
Since $\varepsilon>0$ is arbitrary, $\bar{H}_{n}(P)$ converges to some value as $n\to\infty$.

Step 2.
We next prove that the limit of $\bar{H}_{n}(P)$ is independent of a choice of $\{H_{n}\}_{n\in\mathbf{N}}$ satisfying (A1)--(A4). 
Let $\{H_{n}\}_{n\in\mathbf{N}}$ and $\{H_{n}^{\prime}\}_{n\in\mathbf{N}}$ be two sequences of Hamiltonians satisfying (A1)--(A4). 
For each $P\in\mathbf{R}^{N}$, let $(u_{n},\bar{H}_{n}(P))$ and $(u_{n}^{\prime},\bar{H}_{n}^{\prime}(P))$ be, respectively, solutions of (CP$_{n}$) and
\begin{equation*}
H^{\prime}_{n}(x,Du_{n}^{\prime}+P)=\bar{H}_{n}^{\prime}(P)\quad\text{in $\mathbf{T}^{N}$}.
\end{equation*}
Consider a new sequence
\begin{equation*}
H_{1},H^{\prime}_{1},H_{2},H^{\prime}_{2},H_{3},H^{\prime}_{3},\cdots.
\end{equation*}
This satisfies (A3) and (A4), so that
\begin{equation*}
\bar{H}_{1}(P),\bar{H}^{\prime}_{1}(P),\bar{H}_{2}(P),\bar{H}^{\prime}_{2}(P),\bar{H}_{3}(P),\bar{H}^{\prime}_{3}(P),\cdots
\end{equation*}
has a limit $a\in\mathbf{R}$. 
Therefore
\begin{equation*}
a=\lim_{n\to\infty}\bar{H}_{n}(P)=\lim_{n\to\infty}\bar{H}^{\prime}_{n}(P)
\end{equation*}
since both $\{\bar{H}_{n}(P)\}_{n\in\mathbf{N}}$ and $\{\bar{H}^{\prime}_{n}(P)\}_{n\in\mathbf{N}}$ are subsequences.
We denote this common limit by $\bar{H}_{\infty}(P)$.

\begin{remark}\label{re:approx}
Thanks to the uniqueness of the pointwise limit we are able to take a specific approximation.
Let us take an approximating Hamiltonian $H_n$ of the form
\begin{equation*}
H_{n}(x,p)=\sigma(x)M_{n}(|p|),
\end{equation*}
where $M_{n}:[0,\infty)\to[m_{0},\infty)$ is an approximating function of $m$ such that
\begin{itemize}
\item[(B1)]
$M_{n}$ is Lipschitz continuous,
\item[(B2)]
$M_{n}(r) \to \infty$ as $r \to \infty$,
\item[(B3)]
there exists $\alpha_{n}\in\mathbf{R}$ such that
\begin{equation*}
M_{n}(r)=m(r)\quad\text{for $r\in[0,\alpha_{n}]$},\quad M_{n}(r)>m(r)\quad\text{for $r\in(\alpha_{n},\infty)$},
\end{equation*}
for each $n\in\mathbf{N}$ and $\alpha_{n}\to\infty$ as $n\to\infty$,
\item[(B4)]
$M_{n}(r)\ge M_{n^{\prime}}(r)$ for all $n^{\prime}\ge n$ and $r\in[0,\infty)$.
\end{itemize}
For instance, 
\begin{equation}\label{eq:Mn}
M_{n}(r)=\max\{m(r),Lr-n\}
\end{equation}
satisfies (B1)--(B4), 
where $L$ is the Lipschitz constant of $m$.
\end{remark}

\subsection{Properties of the  generalized effective Hamiltonian}

In this subsection we shall derive some properties of the generalized effective Hamiltonian.
These properties will improve the convergence of $\bar{H}_{n}$.

\begin{proposition}[Properties of the generalized effective Hamiltonian]
\label{prop:eeh}
We have
\begin{itemize}
\item[(1)]
$|\bar{H}_{\infty}(P)-\bar{H}_{\infty}(Q)| \le \overline{\sigma}L|P-Q|$ for all $P, Q \in \mathbf{R}^N$,
where $L$ is the Lipschitz constant of $m$,
\item[(2)]
$\bar{H}_{\infty}(k P)\ge\bar{H}_{\infty}(P)$ for all $P\in\mathbf{R}^{N}$ and $k\ge1$,
\item[(3)]
$\bar{H}_{\infty}(P)=\bar{H}_{\infty}(-P)$ for all $P\in\mathbf{R}^{N}$,
\item[(4)]
$\max\{\underline{\sigma}m(|P|),\overline{\sigma}m_{0}\}\le\bar{H}_{\infty}(P)\le\overline{\sigma}m(|P|)<\overline{\sigma}$ for all $P\in\mathbf{R}^{N}$.
\end{itemize}
\end{proposition}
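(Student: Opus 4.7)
The plan is to prove each property by applying the corresponding statement from Proposition \ref{prop:eh} (or a test-function bound from Lemma \ref{est}) to the coercive approximation $\bar{H}_n$, and then passing to the pointwise limit $\bar{H}_n(P) \to \bar{H}_\infty(P)$ already established in Steps 1--2 of the proof of Theorem \ref{thm:1}. Throughout, I would work with the specific monotone approximation $H_n(x, p) = \sigma(x) M_n(|p|)$ from Remark \ref{re:approx}. Recall that $M_n$ is Lipschitz with the same constant $L$ as $m$, nondecreasing on $[0,\infty)$, satisfies $M_n(r) \ge m_0$ for all $r \ge 0$, and coincides with $m$ on $[0, \alpha_n]$ with $\alpha_n \to \infty$.

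For (1), the computation $|H_n(x,p)-H_n(x,q)| = \sigma(x)|M_n(|p|)-M_n(|q|)| \le \overline{\sigma} L |p-q|$ feeds directly into Proposition \ref{prop:eh}(1) to give $|\bar{H}_n(P)-\bar{H}_n(Q)| \le \overline{\sigma} L |P-Q|$, and passing $n \to \infty$ concludes. For (2), since $M_n$ is nondecreasing, $H_n(x, p) = \sigma(x) M_n(|p|) \le \sigma(x) M_n(k|p|) = H_n(x, kp)$ for any $k \ge 1$, so Proposition \ref{prop:eh}(2) yields $\bar{H}_n(kP) \ge \bar{H}_n(P)$. For (3), the trivial identity $H_n(x,p) = H_n(x,-p)$ combined with Proposition \ref{prop:eh}(3) gives $\bar{H}_n(P) = \bar{H}_n(-P)$. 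All three properties then pass to the pointwise limit.

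The upper bound and the easier lower bound in (4) both come from Lemma \ref{est} with the test function $\phi \equiv 0$ applied to $H_n$: $\underline{\sigma} M_n(|P|) \le \bar{H}_n(P) \le \overline{\sigma} M_n(|P|)$. For $n$ large enough that $\alpha_n > |P|$, condition (B3) in Remark \ref{re:approx} forces $M_n(|P|) = m(|P|)$, so passing to the limit gives $\underline{\sigma}m(|P|) \le \bar{H}_\infty(P) \le \overline{\sigma}m(|P|)$. The strict inequality $\overline{\sigma} m(|P|) < \overline{\sigma}$ is then immediate from $m < 1$ in (H2).

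The remaining lower bound $\overline{\sigma} m_0 \le \bar{H}_\infty(P)$ is where the argument requires genuine work, since the naive choice $\phi \equiv 0$ in Lemma \ref{est} only sees $\underline{\sigma}$. I would fix a point $x^* \in \mathbf{T}^N$ with $\sigma(x^*) = \overline{\sigma}$ and a smooth periodic function $\eta\colon \mathbf{T}^N \to \mathbf{R}$ attaining its strict maximum at $x^*$, and then test the subsolution $u_n$ of (CP$_n$) against $-a\eta$ for large $a > 0$. Since $u_n$ is bounded and $\eta$ has a unique maximum at $x^*$, the maximum point $\hat{x}_a$ of $u_n + a\eta$ satisfies $\hat{x}_a \to x^*$ as $a \to \infty$; since $u_n$ is Lipschitz, the element $-aD\eta(\hat{x}_a) \in D^+ u_n(\hat{x}_a)$ is bounded by $\mathrm{Lip}[u_n]$. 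Extracting a subsequence $-aD\eta(\hat{x}_a) \to p^*$ and passing the subsolution inequality $\sigma(\hat{x}_a) M_n(|-aD\eta(\hat{x}_a)+P|) \le \bar{H}_n(P)$ to the limit yields $\overline{\sigma}\, M_n(|p^*+P|) \le \bar{H}_n(P)$. Since $M_n \ge m_0$, one obtains $\overline{\sigma} m_0 \le \bar{H}_n(P)$ for every $n$; taking $n \to \infty$ completes the proof. The main obstacle is precisely this final bound, which requires localizing the test function at a maximizer of $\sigma$ and exploiting the uniform lower estimate $M_n \ge m_0$ inherited from (B3).
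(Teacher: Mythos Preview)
Your proof is correct and follows essentially the same strategy as the paper: establish (1)--(3) by applying Proposition \ref{prop:eh} to the specific approximation $H_n(x,p)=\sigma(x)M_n(|p|)$ of Remark \ref{re:approx}, obtain (4) from Lemma \ref{est}, and pass to the pointwise limit. The only difference is that your bump-function localization for the bound $\overline{\sigma}m_0 \le \bar{H}_n(P)$ is more work than needed: since $H_n(x,p+P)=\sigma(x)M_n(|p+P|)\ge\sigma(x)m_0$ for every $p$, the estimate $\bar{H}_n(P)\ge\sup_{x}\sup_{p\in D^{+}u_n(x)}H_n(x,p+P)$ from Lemma \ref{est} together with the density of $\{x:D^{+}u_n(x)\neq\emptyset\}$ (the same fact used in Proposition \ref{prop:est2} for $A^{-}$) already gives $\bar{H}_n(P)\ge\overline{\sigma}m_0$ directly.
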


\begin{proof}
Take $H_n$ as in Remark \ref{re:approx},
where we set $M_n$ by \eqref{eq:Mn}.
Let $\bar{H}_{n}$ be the effective Hamiltonian of $H_n$.
We then have
\begin{equation*}
|H_n(x, p)-H_n(x, q)| \le \overline{\sigma}L|p-q| \quad \text{for all $x \in \mathbf{T}^N$, $p, q \in \mathbf{R}^N$.}
\end{equation*}
Hence, Proposition \ref{prop:eh} (1) shows
\begin{equation*}
|\bar{H}_n(P)-\bar{H}_n(Q)| \le \overline{\sigma}L|P-Q| \quad \text{for all $P, Q \in \mathbf{R}^N$.}
\end{equation*}
Sending $n\to\infty$ yields the conclusion (1).

By a similar argument the properties (2)--(3) are verified from Proposition \ref{prop:eh}
since our coercive Hamiltonians $H_{n}$
satisfy the assumptions of Proposition \ref{prop:eh} (2)--(3).
The property (4) is a consequence of Lemma \ref{est}.
\end{proof}

\subsection{The proof of Theorem \ref{thm:1}}

As noted in Subsection 3.2, the proof consists of four steps and the first two steps have already been shown in Subsection 3.2.
We start from Step 3.

Step 3.
Assume that $\{H_{n}\}_{n\in\mathbf{N}}$ is monotone, i.e.,
$H_{n} \geq H_{n'}$ on $\mathbf{T}^{N}\times\mathbf{R}^{N}$ for all $n \leq n'$.
By this monotonicity we see that $\bar{H}_{n} \geq \bar{H}_{n'}$ if $n \leq n'$.
Indeed, a solution $u_{n}$ of \eqref{CPn} is always a subsolution of
\begin{equation*}
H_{n'}(x, D u_{n}+P) = \bar{H}_{n}(P).
\end{equation*}
for $n^{\prime}\ge n$.
Thus Proposition \ref{prop:cpcp} yields $\bar{H}_{n}(P) \geq \bar{H}_{n'}(P)$.
Since $\bar{H}_{\infty}$ is continuous in view of Proposition \ref{prop:eeh} (1),
Dini's lemma implies that 
$\bar{H}_{n}$ converges to $\bar{H}_{\infty}$ locally uniformly in $\mathbf{R}^{N}$ as $n \to \infty$.
(For the proof of Proposition \ref{prop:eeh} (1)
we only need a pointwise convergence of $\bar{H}_{n}$ to $\bar{H}_{\infty}$ and the uniqueness of $\bar{H}_{\infty}$.)

Step 4.
We shall show \eqref{eq:conv1} and \eqref{eq:conv2} for a general $\{H_{n}\}_{n\in\mathbf{N}}$.
Sending $m \to \infty$ in \eqref{eq3} of Step 1,
we obtain 
\begin{equation*}
\bar{H}_{\infty}(P)\le\bar{H}_{n}(P)+\varepsilon.
\end{equation*}
This inequality holds for all $\varepsilon >0$, $n \ge K$ and $P \in \mathbf{R}^N$, 
where $K$ does not depend on $P$.
Accordingly we have 
$\sup_{\mathbf{R}^N} (\bar{H}_{\infty}-\bar{H}_{n}) \le \varepsilon$,
and thus taking $\limsup_{n \to \infty}$ yields \eqref{eq:conv2}
since $\varepsilon >0$ is arbitrary.

To prove \eqref{eq:conv1} we define $\{H'_{n}\}_{n\in\mathbf{N}}$ by
$H'_n(x,p):=\sup_{m \ge n}H_m (x,p)$.
Then $\{H'_{n}\}_{n\in\mathbf{N}}$ is monotone and 
$H'_n \ge H_n$ on $\mathbf{T}^{N}\times\mathbf{R}^{N}$ for all $n$.
Also, $\{H'_{n}\}_{n\in\mathbf{N}}$ satisfies (A1)--(A4);
it is easy to see that (A2)--(A4) hold
while the continuity condition (A1) is due to Ascoli-Arzel\`{a} theorem,
which asserts that, for a compact set $K\subset\mathbf{R}^{N}$, a sequence $\{f_{n}\}_{n\in\mathbf{N}}\subset C(K)$ is uniformly bounded and equicontinuous if and only if every subsequence of $\{f_{n}\}$ has a uniformly convergent subsequence. We apply the if-part of this theorem to see that $\{H_{n}\}$ is equi-continuous on each compact set of $\mathbf{T}^{N}\times\mathbf{R}^{N}$ since $H_{n}\to H$  uniformly on the set.
Therefore the supremum $H^{\prime}_{n}$ is continuous.
From Step 3 it follows that
$\bar{H}'_{n}$ converges to $\bar{H}_{\infty}$ locally uniformly.
Therefore, using $\bar{H}'_n \ge \bar{H}_n$, we observe
\begin{equation*}
\liminf_{n\to\infty}\inf_{\overline{B(0,R)}}
(\bar{H}_{\infty}-\bar{H}_{n}) 
\ge \liminf_{n\to\infty}\inf_{\overline{B(0,R)}}
(\bar{H}_{\infty}-\bar{H}'_{n})=0.
\end{equation*}
The proof is now complete.

\begin{remark}\label{re:generalize}
Theorem \ref{thm:1} still holds for more general, continuous Hamiltonians
which are not necessarily of the form \eqref{eq:ourH}.
Indeed, the above proof works if we require $H$ to satisfy
\begin{equation}\label{eq:pLip}
|H(x,p)-H(x,q)| \le L|p-q| \quad \text{for some $L>0$,}
\end{equation}
which is used to guarantee Proposition \ref{prop:eeh} (1).
\end{remark}

\subsection{The proof of Theorem \ref{thm:2}}

We first prepare
\begin{proposition}\label{prop:est2}
Let $P \in \mathcal{D}$ and let $c \in \mathbf{R}$ be the critical value of \eqref{cp}.
Then,

\begin{equation*}
\left.\begin{array}{r}
	\underline{\sigma}m(|P|)\\
	\overline{\sigma}m_{0}
\end{array}\right\}
\le c\le
\overline{\sigma}m(|P|),
\quad c<\underline{\sigma}.
\end{equation*}
In particular, we have $\mathcal{D}=\emptyset$ if $\overline{\sigma}m_{0}\ge\underline{\sigma}.$
\end{proposition}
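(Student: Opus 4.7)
The plan is to combine the elementary test-function bounds from Lemma~\ref{est} with the analogous bounds that use the solution $u$ itself as the test function. The two ``symmetric'' bounds $\underline{\sigma}m(|P|) \le c \le \overline{\sigma}m(|P|)$ come for free: plugging $\phi \equiv 0$ into the first pair of inequalities of Lemma~\ref{est} gives
\begin{equation*}
\underline{\sigma}m(|P|) = \inf_{x \in \mathbf{T}^{N}} H(x,P) \le c \le \sup_{x \in \mathbf{T}^{N}} H(x,P) = \overline{\sigma}m(|P|),
\end{equation*}
since $H(x,P) = \sigma(x)m(|P|)$.

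For $\overline{\sigma}m_{0} \le c$ and $c < \underline{\sigma}$ I would invoke the second pair of inequalities of Lemma~\ref{est},
\begin{equation*}
\sup_{x \in \mathbf{T}^{N}} \sup_{p \in D^{+}u(x)} H(x, p+P) \le c \le \inf_{x \in \mathbf{T}^{N}} \inf_{p \in D^{-}u(x)} H(x, p+P).
\end{equation*}
At any point $x$ with $D^{+}u(x) \ne \emptyset$ and any $p \in D^{+}u(x)$, (H3) gives $H(x,p+P) = \sigma(x) m(|p+P|) \ge \sigma(x) m_{0}$. Since $\{x : D^{+}u(x) \ne \emptyset\}$ is dense in $\mathbf{T}^{N}$ (a standard fact for continuous $u$, proved by maximising $u(y) - \varepsilon|y - x_{0}|^{2}$ near an arbitrary $x_{0}$) and $\sigma$ is continuous, taking the supremum yields $\overline{\sigma}m_{0} \le c$. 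For the reverse bound, any $p \in D^{-}u(x)$ satisfies $|p| \le {\rm Lip}[u] =: L$, hence
\begin{equation*}
H(x, p+P) = \sigma(x) m(|p+P|) \le \sigma(x) m(L + |P|).
\end{equation*}
Because $m(L + |P|) < 1$ by (H2)--(H3) and $\{x : D^{-}u(x) \ne \emptyset\}$ is also dense, taking the infimum gives $c \le \underline{\sigma} m(L + |P|) < \underline{\sigma}$.

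The final assertion is then immediate: if $\overline{\sigma}m_{0} \ge \underline{\sigma}$, any $P \in \mathcal{D}$ would produce the contradiction $\underline{\sigma} \le \overline{\sigma}m_{0} \le c < \underline{\sigma}$, so $\mathcal{D} = \emptyset$. The only mildly delicate step in the plan is the passage from inequalities valid on a dense subset to the extremal constants $\overline{\sigma}$ and $\underline{\sigma}$, which hinges on the continuity of $\sigma$; the strict inequality $c < \underline{\sigma}$ crucially exploits that $m$ stays strictly below $1$ on every bounded interval, together with the Lipschitz regularity of $u$ which provides the uniform gradient bound $|p| \le L$ on elements of $D^{-}u$.
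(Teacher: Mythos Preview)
Your proof is correct and follows essentially the same route as the paper: both use $\phi\equiv 0$ in Lemma~\ref{est} for the bounds $\underline{\sigma}m(|P|)\le c\le \overline{\sigma}m(|P|)$, and both exploit the density of $\{x:D^{\pm}u(x)\neq\emptyset\}$ together with the Lipschitz bound $|p|\le{\rm Lip}[u]$ to obtain $\overline{\sigma}m_{0}\le c$ and $c\le\underline{\sigma}m({\rm Lip}[u]+|P|)<\underline{\sigma}$. The paper cites \cite[Lemma 1.8(d)]{BC} for the density fact and merely says the $\overline{\sigma}m_{0}\le c$ case ``is easier,'' whereas you spell both out explicitly.
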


\begin{proof}
Taking $\phi \equiv 0$ in Lemma \ref{est} implies
\begin{align*}
c
&\le \inf_{\phi \in C^{1}(\mathbf{T}^{N})}\sup_{x \in \mathbf{T}^{N}}H(x, D\phi(x)+P)
\le \sup_{x\in\mathbf{T}^{N}}H(x, P)
= \overline{\sigma}m(|P|), \\
c
&\ge \sup_{\phi \in C^{1}(\mathbf{T}^{N})}\inf_{x \in \mathbf{T}^{N}}H(x, D\phi(x)+P)
\ge \inf_{x\in\mathbf{T}^{N}}H(x, P)
= \underline{\sigma}m(|P|). \\
\end{align*}

We next show $c < \underline{\sigma}$.
Take a solution $u \in {\rm Lip}(\mathbf{T}^{N})$ of \eqref{cp}.
For every $x \in A^{-} := \{ x \in \mathbf{T}^{N} \mid D^{-}u(x) \neq \emptyset \}$, take $p \in D^{-}u(x)$.
Since $|p| \le {\rm Lip}[u]$, we have
\begin{equation*}
\inf_{p \in D^{-}u(x)}H(x, p+P)
\le H(x, p+P)
\le \sigma(x)m({\rm Lip}[u]+|P|).
\end{equation*}
Therefore, by Lemma \ref{est},
\begin{align*}
c
&\le \inf_{x \in \mathbf{T}^{N}}\inf_{p \in D^{-}u(x)}H(x, p+P)
\le \inf_{x \in A^{-}}\inf_{p \in D^{-}u(x)}H(x, p+P) \\
&\le \inf_{x \in A^{-}}\sigma(x)m({\rm Lip}[u]+|P|).
\end{align*}
According to \cite[Lemma 1.8 (d)]{BC}, the set $A^{-}$ is dense in $\mathbf{T}^{N}$. 
Thus, we obtain $c \le \underline{\sigma}m({\rm Lip}[u]+|P|) < \underline{\sigma}$.
The proof of the inequality $\overline{\sigma}m_{0} \le c$ is easier.
\end{proof}

Let $\widehat{\mathcal{D}}:=\{P\in\mathbf{R}^{N} \mid \bar{H}_{\infty}(P)<\underline{\sigma}\}$. 
We note that $\mathcal{D}=\widehat{\mathcal{D}}=\emptyset$ when $\overline{\sigma}m_{0}\ge\underline{\sigma}$. Indeed, Proposition \ref{prop:est2} implies $\mathcal{D}=\emptyset$ and Proposition \ref{prop:eeh} (4) implies $\widehat{\mathcal{D}}=\emptyset$. 
We may hereafter assume that $\overline{\sigma}m_{0}<\underline{\sigma}$.
Let us take the special approximating Hamiltonian $H_n(x, p) = \sigma(x)M_n(|
p|)$ with the conditions (B1)--(B4) in Remark \ref{re:approx}.

\noindent\textbf{\underline{Proof  of $\mathcal{D}\supset\widehat{\mathcal{D}}$}}. 
We define
\begin{equation*}
\mathcal{D}_{\ell}:=\{P\in\mathbf{R}^{N} \mid \bar{H}_{\ell}(P)\le\underline{\sigma}m(\alpha_{\ell})\}.
\end{equation*}
for $\ell\in\mathbf{N}$. Note that $\cup_{\ell=0}^{\infty}\mathcal{D}_{\ell}=\widehat{\mathcal{D}}$. It is easy to check this equation since $\bar{H}_{\ell}(P)\to\bar{H}_{\infty}(P)$ as $\ell\to\infty$ with $\bar{H}_{\ell}(P)\ge\bar{H}_{\infty}(P)$ and $\underline{\sigma}m(\alpha_{\ell})\to\underline{\sigma}$ as $\ell\to\infty$ with $\underline{\sigma}m(\alpha_{\ell})<\underline{\sigma}$.

Therefore, if $\mathcal{D}\supset\mathcal{D}_{\ell}$ for every $\ell\in\mathbf{N}$, then we will have $\mathcal{D}\supset\cup_{\ell=0}^{\infty}\mathcal{D}_{\ell}=\widehat{\mathcal{D}}$.

Fix any $P\in\mathcal{D}_{\ell}$ and let $(u_{n},\bar{H}_{n}(P))$ be a solution of \eqref{CPn}. 
Note that $\bar{H}_{n}(P)$ is monotone decreasing with respect to $n$ by (B4) and Proposition \ref{prop:cpcp}. 
For each $n\in\mathbf{N}$ such that $n\ge\ell$,
\begin{equation*}
M_{n}(|Du_{n}+P|)=\frac{\bar{H}_{n}(P)}{\sigma(x)}\le\frac{\bar{H}_{\ell}(P)}{\underline{\sigma}}\le m(\alpha_{\ell})\quad\text{in $\mathbf{T}^{N}$}
\end{equation*}
in the viscosity sense. 
Note that the last inequality follows from $P\in\mathcal{D}_{\ell}$. 
Since $m\le M_{n}$ on $[0,\infty)$ and $m$ is strictly increasing, we have
\begin{equation*}
|Du_{n}(x)|\le\alpha_{\ell}+|P| \quad \text{in $\mathbf{T}^{N}$}
\end{equation*}
in the viscosity sense. 
Thus,
\begin{equation*}
\sup_{n\ge \ell}{\rm Lip}[u_{n}]\le\alpha_{\ell}+|P|<\infty.
\end{equation*}

Set $v_{n}(y):=u_{n}(y)-\min u_{n}$. 
Then, $\{v_{n}\}_{n\in\mathbf{N}}$ is uniformly bounded and equi-Lipschitz continuous in $\mathbf{T}^{N}$. 
Thus, by taking a subsequence if necessary, Ascoli-Arzel\`{a} theorem implies that $v_{n}$ uniformly converges to some Lipschitz continuous function $u$ in $\mathbf{T}^{N}$ as $n\to\infty$. 
Since $M_{n}$ converges to $m$ locally uniformly in $[0,\infty)$ by (B3) of $M_{n}$, the stability of viscosity solutions (see \cite{CIL}) implies that $(u,\bar{H}_{\infty}(P))$ is a solution of \eqref{cp}, which means that $P\in\mathcal{D}$. We get the desired inclusion $\mathcal{D}\supset\widehat{\mathcal{D}}$.

\noindent\underline{\textbf{\upshape Proof of $\mathcal{D}\subset\widehat{\mathcal{D}}$}}.  
Fix any $P\in\mathcal{D}$ and let $(u,c)\in$Lip$(\mathbf{T}^{N})\times\mathbf{R}$ be a solution of \eqref{cp}. 
The condition (B3) of $M_{n}$ implies
\begin{equation*}
M_{n}(r)=m(r)\quad \text{for all $r\le{\rm Lip}[u]+|P|$}
\end{equation*}
for sufficiently large $n$. 
Hence, $(u,c)$ is a solution of 
\begin{equation*}
\sigma(x)M_{n}(|Du+P|)=c \quad \text{in $\mathbf{T}^{N}$}.
\end{equation*}
Since $\bar{H}_{n}(P)$ is the critical value of the above problem,
we have $\bar{H}_{n}(P)=c$. 
Sending $n\to\infty$ yields $\bar{H}_{\infty}(P)=c$. 
Since $c<\underline{\sigma}$ by Proposition \ref{prop:est2}, we have $\bar{H}_{\infty}(P)<\underline{\sigma}$. 
Thus, 
$\mathcal{D}\subset\widehat{\mathcal{D}}$. 
The proof of Theorem \ref{thm:2} is complete.

\begin{remark}
By the last part of the proof, we see that for every $R>0$
there exists $N_R \in \mathbf{N}$ such that
$\bar{H}_{\infty}(P)=\bar{H}_{n}(P)$ for all $P \in \overline{B(0,R)}$ and $n \ge N_R$.
This is thanks to the conditions (B1)--(B4).
\end{remark}

\begin{remark}
Applying a method in the proof of Proposition \ref{prop:est2} to the approximating Hamiltonians $H_{n}$ above, we see $\max\{\underline{\sigma}M_{n}(|P|),\overline{\sigma}M_{n}(0)\}\le\bar{H}_{n}(P)\le\overline{\sigma}M_{n}(|P|)$. Since $M_{n}(0)=m(0)$, letting $P=0$ gives $\bar{H}_{n}(0)=\overline{\sigma}m_{0}$, so that $\bar{H}_{\infty}(0)=\overline{\sigma}m_{0}$. 
Thus Theorem \ref{thm:2} implies that $0 \in \mathcal{D}$ if $\overline{\sigma}m_{0}<\underline{\sigma}$.
Moreover, from the Lipschitz continuity of $\bar{H}_{\infty}$ 
(Proposition \ref{prop:eeh} (1))
it follows that $B(0,(\underline{\sigma}-\overline{\sigma}m_{0})/\overline{\sigma}L) \subset \mathcal{D}$,
where $L$ is the Lipschitz constant of $m$.
\end{remark}

\begin{remark}\label{re:generalize2}
A similar proof applies to more general Hamiltonians.
Let $H$ be a Hamiltonian satisfying \eqref{eq:pLip}. 
We define
\begin{equation*}
h(\rho):=\inf_{x \in \mathbf{T}^N}\inf_{|p| \ge \rho}H(x,p), \quad
h_{\infty}:=\sup_{\rho \ge 0}h(\rho),
\end{equation*}
and assume 
\begin{itemize}
\item[(H4)]
$\displaystyle{\inf_{x \in \mathbf{T}^N}\sup_{|p| \le \rho}H(x,p)< h_{\infty}}$ for all $\rho \ge 0$.

\end{itemize}
Then it turns out that 
$\mathcal{D}=\{P\in\mathbf{R}^{N} \ | \ \bar{H}_{\infty}(P)<h_{\infty}\}$.
We shall give a sketch of the proof of this generalization.

We first show that the critical value $c$ of \eqref{cp} satisfies $c<h_{\infty}$.
In a similar way to the proof of Proposition \ref{prop:est2},
we see
\[ c \le \inf_{x \in A^-} \inf_{p \in D^-u(x)}H(x, p+P)
\le \inf_{x \in A^-} \sup_{|p| \le \rho}H(x, p) \]
with $\rho:=\mathrm{Lip}[u]+|P|$,
where $u \in \mathrm{Lip}(\mathbf{T}^N)$ is a solution of \eqref{cp}
and $A^-= \{ x \in \mathbf{T}^N \mid D^- u(x) \neq \emptyset \}$.
Since $\sup_{|p| \le \rho}H(\cdot, p)$ is continuous 
by the compactness of $\overline{B(0,\rho)}$ (see Lemma \ref{lem:supf})
and since $A^-$ is dense in $\mathbf{T}^N$,
using (H4), we estimate
\[ c \le \inf_{x \in \mathbf{T}^N} \sup_{|p| \le \rho}H(x, p)<h_{\infty}. \]
Next, we see that 
$\{ p \in \mathbf{R}^{N} \ | \ H(x,p)\le \tau \ \text{for some} \ x \in \mathbf{T}^N \}$
is bounded for every $\tau<h_{\infty}$.
Indeed, if there were some sequence $\{ (x_j,p_j) \}_{j \in \mathbf{N}}$ such that $|p_j| \to \infty$ as $j \to \infty$,
we would have
$h(|p_j|) \le H(x_j,p_j) \le \tau < h_{\infty}$,
which is a contradiction since $\sup_{j \in \mathbf{N}} h(|p_j|)<h_{\infty}$.

Define $\widehat{\mathcal{D}}:=\{P\in\mathbf{R}^{N} \ | \ \bar{H}_{\infty}(P)<h_{\infty}\}$,
and take an approximate Hamiltonian $H_n$ as $H_n(x,p)=\max\{ H(x,p),|p|-n \}$.
To prove $\mathcal{D} \supset \widehat{\mathcal{D}}$
we set $\mathcal{D}_{\ell}:=\{ P \in \mathbf{R}^{N} \ | \ \bar{H}_{\ell}(P) \le \tau_{\ell} \}$,
where $\{ \tau_{\ell} \}_{\ell \in \mathbf{N}}$ is a sequence such that 
$\tau_{\ell} < h_{\infty}$ and $\tau_{\ell} \to h_{\infty}$ as $\ell \to \infty$.
Then $\bigcup_{\ell=1}^{\infty}\mathcal{D}_{\ell}=\widehat{\mathcal{D}}$.
Fix $\ell \in \mathbf{N}$.
For every $P \in \mathcal{D}_{\ell}$ and $n \ge l$,
a solution $(u_n,\bar{H}_n(P))$ of \eqref{CPn} satisfies
\begin{equation*}
H(x,Du_n +P) \le H_n(x,Du_n +P)=\bar{H}_n(P)\le \bar{H}_{\ell}(P) \le \tau_{\ell}.
\end{equation*}
Since $\tau_{\ell}<h_{\infty}$, 
we have $\sup_{n\ge \ell}{\rm Lip}[u_{n}]<\infty$.
Ascoli-Arzel\`{a} theorem ensures that 
$u_n-\min u$ subsequently converges to some $u$,
and thus $(u,\bar{H}_{\infty}(P))$ solves \eqref{cp}.
The proof of $\mathcal{D} \subset \widehat{\mathcal{D}}$ is easier.
Indeed, by the choice of $H_n$, a solution $(u,c)$ of \eqref{cp} 
is also a solution of \eqref{CPn} for $n$ sufficiently large,
and therefore $\bar{H}_{\infty}(P)=\bar{H}_n(P)=c<h_{\infty}$.
\end{remark}

\subsection{A sufficient condition for the fully solvability}

Applying the result in Theorem \ref{thm:2},
we give a sufficient condition which guarantees that
\eqref{cp} is fully solvable, i.e., $\mathcal{D}=\mathbf{R}^n$.

\begin{theorem}\label{thm:3}
Assume $\overline{\sigma}m_{0}<\underline{\sigma}$.
Let $P \in \mathbf{R}^N$ and
assume that there exists $\psi \in C^1(\mathbf{T}^N)$ such that $D\psi =-P$ 
on $\{ \sigma \neq \underline{\sigma} \}$.
Then $P \in \mathcal{D}$.
\end{theorem}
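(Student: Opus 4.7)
The plan is to invoke Theorem~\ref{thm:2}, which reduces $P \in \mathcal{D}$ to the strict inequality $\bar{H}_{\infty}(P) < \underline{\sigma}$. Since $H$ itself is non-coercive, the upper bound in Lemma~\ref{est} cannot be used for $H$ directly (it presupposes the existence of a supersolution of \eqref{cp}, which is exactly what we want to produce). So I would route through the monotone coercive approximation $H_{n}(x,p) = \sigma(x) M_{n}(|p|)$ from Remark~\ref{re:approx}, for which Lemma~\ref{est} applies and $\bar{H}_{n}(P) \to \bar{H}_{\infty}(P)$ as $n \to \infty$.

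First, because $\psi \in C^{1}(\mathbf{T}^{N})$ and $\mathbf{T}^{N}$ is compact, the quantity $R := \max_{x \in \mathbf{T}^{N}}|D\psi(x)+P|$ is finite. Property (B3) together with $\alpha_{n} \to \infty$ then ensures $M_{n} \equiv m$ on $[0,R]$ for all $n$ sufficiently large. Plugging the test function $\phi = \psi$ into the upper half of Lemma~\ref{est} applied to the coercive cell problem \eqref{CPn} gives, for such $n$,
\[
\bar{H}_{n}(P) \le \sup_{x \in \mathbf{T}^{N}} H_{n}(x, D\psi(x)+P) = \sup_{x \in \mathbf{T}^{N}} \sigma(x)\, m(|D\psi(x)+P|).
\]

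Second, I split the supremum according to whether $x \in \{\sigma \neq \underline{\sigma}\}$ or $x \in \{\sigma = \underline{\sigma}\}$. On the former set the hypothesis forces $D\psi(x)+P = 0$, so the integrand equals $\sigma(x) m_{0} \le \overline{\sigma} m_{0}$, which is strictly less than $\underline{\sigma}$ by the standing assumption $\overline{\sigma}m_{0}<\underline{\sigma}$. On the latter set the integrand is bounded by $\underline{\sigma}\, m(R)$, which is strictly less than $\underline{\sigma}$ by~(H3). Therefore
\[
\bar{H}_{n}(P) \le \max\{\overline{\sigma} m_{0},\; \underline{\sigma} m(R)\} < \underline{\sigma}
\]
for all $n$ large enough, and sending $n \to \infty$ yields $\bar{H}_{\infty}(P) < \underline{\sigma}$, so $P \in \mathcal{D}$ by Theorem~\ref{thm:2}.

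The only nontrivial point is the detour through the approximation: one cannot apply Lemma~\ref{est} directly to $H$ because that would already assume solvability of \eqref{cp}. Once that detour is made, the computation is short, and the strictness of the final bound reflects the two independent reasons why the two competing quantities stay below $\underline{\sigma}$: on the ``bad'' set $\{\sigma=\underline{\sigma}\}$ because $|D\psi+P|$ is bounded and $m<1$ on $[0,\infty)$, and on its complement because of the standing assumption $\overline{\sigma} m_{0} < \underline{\sigma}$.
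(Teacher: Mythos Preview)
Your proof is correct and follows essentially the same route as the paper's: reduce to $\bar{H}_{\infty}(P)<\underline{\sigma}$ via Theorem~\ref{thm:2}, pass to the coercive approximation $H_n$ of Remark~\ref{re:approx}, apply the upper estimate of Lemma~\ref{est} with the test function $\phi=\psi$, and split the resulting supremum over $\{\sigma\neq\underline{\sigma}\}$ and $\{\sigma=\underline{\sigma}\}$. The only cosmetic difference is that you fix $n$ large enough to have $M_n\equiv m$ on $[0,R]$ at the outset, whereas the paper treats the two regions first and then chooses $n$; both yield the uniform bound $\bar{H}_n(P)\le\max\{\overline{\sigma}m_0,\underline{\sigma}m(R)\}<\underline{\sigma}$, from which the conclusion follows.
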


If there exists such a $\psi$ for every $P \in \mathbf{R}^N$,
then \eqref{cp} is fully solvable.
A simple condition for the existence of $\psi$
will be given after the proof; see Remark \ref{rem:scond}.

\begin{proof}
We take $H_n$ as in Remark \ref{re:approx}.
By the representation of $\mathcal{D}$ obtained in Theorem \ref{thm:2},
the proof is completed by showing that $\bar{H}_n(P)<\underline{\sigma}$
for $n \in \mathbf{N}$ sufficiently large.
To this end, we use the estimate 
\begin{equation*}
\bar{H}_n(P) \le \inf_{\phi \in C^1(\mathbf{T}^N)} \sup_{x \in \mathbf{T}^N}
H_n(x,D\phi (x)+P)
\end{equation*}
in Lemma \ref{est}.
Choosing $\phi=\psi$, 
where $\psi$ is the function in our assumption, we see
\begin{equation}\label{eq:sHn}
\bar{H}_n(P) \le \sup_{x \in \mathbf{T}^N}
H_n(x,D\psi (x)+P).
\end{equation}
On $\{ \sigma \neq \underline{\sigma} \}$ we compute
\begin{equation*}
H_n(x,D\psi (x)+P)
=H_n(x,0)
=\sigma(x) m_0
\le \overline{\sigma}m_0
<\underline{\sigma}.
\end{equation*}
For $x \in \mathbf{T}^N$ such that $\sigma(x)=\underline{\sigma}$, 
we have
\begin{equation*}
H_n(x,D\psi (x)+P)
=\underline{\sigma} M_n (|D\psi (x)+P|).
\end{equation*}
We now set $r_0:=\max_{x \in \mathbf{T}^N} |D\psi (x)+P|<\infty$
and choose $n$ large so that $M_n(r)=m(r)$ for all $r \le r_0$.
Then 
\begin{equation*}
H_n(x,D\psi (x)+P)
\le \underline{\sigma} m(r_0)
<\underline{\sigma}.
\end{equation*}
Consequently, \eqref{eq:sHn} implies $\bar{H}_n(P)<\underline{\sigma}$.
\end{proof}

\begin{remark}\label{rem:scond}
If $\overline{\{ \sigma \neq \underline{\sigma} \}} \subset (0,1)^N$,
then there exists $\psi$ in Theorem \ref{thm:3} for every $P \in \mathbf{R}^N$.
Indeed, letting $A \subset (0,1)^N$ be an open set such that
$\overline{\{ \sigma \neq \underline{\sigma} \}} \subset A$ and 
$\overline{A} \subset (0,1)^N$, we are able to construct a function $\psi \in C^1(\mathbf{T}^N)$ so that 
$\psi(x)=-\langle P,x \rangle$ for $x \in \{ \sigma \neq \underline{\sigma} \}$ 
and $\psi(x)=0$ for $x \not \in A$.
The existence of such a $\psi$ is due to Whitney's extension theorem;
see, e.g., \cite[Section 6.5, Theorem 1]{EG}.
Let us briefly check the assumption in \cite{EG}. 
Let $f:[0,1]^N \to \mathbf{R}$ and $d:[0,1]^N \to \mathbf{R}^N$
be continuous functions such that
$f(x)=-\langle P,x\rangle$, $d(x)=-P$ 
on $K_1:=\overline{\{ \sigma \neq \underline{\sigma} \}}$ and 
$f(x)=d(x)=0$ on $K_2:=[0,1]^N \setminus A$.
Also, define $K:=K_1 \cup K_2$ and $\delta_0:=\mathrm{dist}(K_1,K_2)>0$.
If $x,y \in K$ satisfy $|x-y|<\delta_0$,
we have $x,y \in K_1$ or $x,y \in K_2$,
and hence 
$R(y,x):=(f(y)-f(x)-\langle d(x),y-x \rangle)/|x-y|=0$.
The theorem is thus applicable.
\end{remark}

\begin{remark}
The existence of $\psi$ in Theorem \ref{thm:3}
is not a necessary condition for $P \in \mathcal{D}$.
In Example \ref{exa2} (1), where we consider the one-dimensional case,
the cell problem is fully solvable, but there is no such periodic $\psi$ for $P \neq 0$
because $\sigma$ attains a minimum at one point.
\end{remark}

We conclude this section by giving 
continuity results for the supremum of continuous functions.
The results are fundamental, 
but we give them for the reader's convenience.

\begin{lemma}\label{lem:supf}
Let $\{ f_{\lambda} \}_{\lambda \in \Lambda} \subset C(X)$, 
where $X \subset \mathbf{R}^N$ and $\Lambda$ is an index set.
Assume that $\{ f_{\lambda} \}$ is uniformly bounded on $X$, 
and set $f:=\sup_{\lambda \in \Lambda}f_{\lambda}$.
Then the following hold:
\begin{itemize}
\item[(1)]
$f$ is lower semicontinuous on $X$.
\item[(2)]
Assume that $\{ f_{\lambda} \}$ is equicontinuous on $X$, i.e.,
$\displaystyle{\lim_{y \to x} \sup_{\lambda \in \Lambda} 
|f_{\lambda}(y)-f_{\lambda}(x)|=0}$.
Then $f$ is upper semicontinuous on $X$.
\item[(3)]
Assume that $\Lambda \subset \mathbf{R}^M$ is compact
and that $\displaystyle{\lim_{(y,\mu) \to (x,\lambda)}f_{\mu}(y)=f_{\lambda}(x)}$
for all $x \in X$ and $\lambda \in \Lambda$.
Then $f$ is upper semicontinuous on $X$.
\end{itemize}
\end{lemma}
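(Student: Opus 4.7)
The plan is to handle the three parts separately, since each concerns a different semicontinuity property of the supremum and uses a different hypothesis. For (1), I would use the standard $\varepsilon$-trick: fix $x \in X$ and $\varepsilon>0$, choose $\lambda \in \Lambda$ with $f_\lambda(x)>f(x)-\varepsilon$, and invoke the continuity of that single $f_\lambda$ at $x$ to get $f_\lambda(y)>f(x)-2\varepsilon$ for $y$ in a neighborhood of $x$. Since $f \ge f_\lambda$ pointwise by definition of the supremum, this gives $\liminf_{y \to x}f(y) \ge f(x)-2\varepsilon$, and letting $\varepsilon \to 0$ yields lower semicontinuity. The uniform boundedness hypothesis is used here only to ensure $f$ is real-valued.

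For (2), the equicontinuity assumption handles things in one line: given $\varepsilon>0$, pick $\delta>0$ so that $|y-x|<\delta$ implies $\sup_\lambda |f_\lambda(y)-f_\lambda(x)|<\varepsilon$. Then for every such $y$ and every $\lambda$ we have $f_\lambda(y) \le f_\lambda(x)+\varepsilon \le f(x)+\varepsilon$, so taking the supremum in $\lambda$ gives $f(y) \le f(x)+\varepsilon$, which is upper semicontinuity at $x$.

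For (3), I would argue by contradiction to exploit the compactness of $\Lambda$. Suppose $f$ failed to be upper semicontinuous at some $x \in X$: there would exist $\varepsilon>0$ and a sequence $y_n \to x$ in $X$ with $f(y_n) \ge f(x)+\varepsilon$. For each $n$, choose $\lambda_n \in \Lambda$ with $f_{\lambda_n}(y_n)>f(y_n)-1/n$. Since $\Lambda$ is compact, a subsequence $\lambda_{n_k}$ converges to some $\lambda^* \in \Lambda$; then $(y_{n_k},\lambda_{n_k}) \to (x,\lambda^*)$, and the joint-limit hypothesis gives $f_{\lambda_{n_k}}(y_{n_k}) \to f_{\lambda^*}(x) \le f(x)$. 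This contradicts $f_{\lambda_{n_k}}(y_{n_k}) \ge f(x)+\varepsilon-1/n_k$, completing the argument.

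The only step with any real content is (3), where the compactness of $\Lambda$ is needed to extract a convergent subsequence of parameters and the joint convergence hypothesis is needed to pass to the limit along the diagonal sequence $(y_{n_k},\lambda_{n_k})$. Parts (1) and (2) are classical and essentially formal; the main care throughout is to invoke each hypothesis where it is actually used and not to conflate pointwise continuity with equicontinuity.
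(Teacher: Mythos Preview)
Your proof is correct and follows essentially the same approach as the paper: for (1) you pick a near-optimal index at $x$ and use continuity of that single $f_\lambda$, for (2) you use equicontinuity to bound $f_\lambda(y)-f_\lambda(x)$ uniformly in $\lambda$, and for (3) you pick near-optimal indices $\lambda_n$ along a sequence $y_n\to x$, extract a convergent subsequence by compactness of $\Lambda$, and pass to the limit via the joint-continuity hypothesis. The only cosmetic difference is that the paper packages (1) and (2) into a single double inequality $f_{\lambda(\varepsilon,x)}(y)-f_{\lambda(\varepsilon,x)}(x)-\varepsilon \le f(y)-f(x) \le f_{\lambda(\varepsilon,y)}(y)-f_{\lambda(\varepsilon,y)}(x)+\varepsilon$ and phrases (3) as a direct limit rather than by contradiction, but the substance is identical.
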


\begin{proof}
Fix $x \in X$ and $\varepsilon >0$.
For any $y \in X$ there exists $\lambda(\varepsilon, y) \in \Lambda$
such that $f(y) \le f_{\lambda(\varepsilon, y)}(y) + \varepsilon$
by the definition of $f$.
Then we have
\[ f_{\lambda(\varepsilon, x)}(y)
-f_{\lambda(\varepsilon, x)}(x)-\varepsilon
\le f(y)-f(x) 
\le f_{\lambda(\varepsilon, y)}(y)
+\varepsilon -f_{\lambda(\varepsilon, y)}(x). \]
Sending $y \to x$ in the first inequality implies (1) while 
(2) follows from the second inequality and the equicontinuity.
Here note that, in the latter case, the index $\lambda(\varepsilon, y)$
changes as $y \to x$,
and so we need to estimate the right-hand side by
$\sup_{\lambda \in \Lambda} |f_{\lambda}(y)-f_{\lambda}(x)|+\varepsilon$.
When $\Lambda$ is compact,
we may assume that $\lambda(\varepsilon, y)$ is a convergent sequence as $y \to x$.
Therefore, using the second inequality again, we obtain (3).
\end{proof}

%--------------------------------------
%             Section 4
%--------------------------------------
\section{One-dimensional cell problem}
In this section we investigate the cell problem in one dimension. 
In this case the solvability set $\mathcal{D}$ has a more explicit representation. 
We first rewrite \eqref{cp} as
\begin{equation}\label{eq:eikonal}
|u^{\prime}(x)+P|=f_{a}(x)\quad\text{in $\mathbf{T}$},
\end{equation}
where
\begin{equation*}
f_{a}(x):=m^{-1}\left(\frac{a}{\sigma(x)}\right).
\end{equation*}
Here, $m^{-1}:[m_{0},1)\to[0,\infty)$ is the inverse function of $m$, and $f_{a}$ is well-defined as a $[0,\infty)$-valued function if $\overline{\sigma}m_{0}\le a<\underline{\sigma}$. 
We now set $m^{-1}(1)=\infty$. 
Then, $f_{\underline{\sigma}}$ is a $[0,\infty]$-valued function. 
Note that $a\mapsto f_{a}(x)$ is increasing for every $x\in\mathbf{T}$. 

The authors of \cite{LPV} consider
\begin{equation}\label{eq:lpv}
|u^{\prime}(x)+P|^{2}-V(x)=a\quad \text{in $\mathbf{T}$}, 
\end{equation}
as an example of the cell problem in one dimension. 
Here, $V$ is a continuous function on $\mathbf{T}$ such that $\min_{\mathbf{T}}V=0$. 
According to \cite{LPV}, for each $P\in\mathbf{R}$, the critical value $c$ of \eqref{eq:lpv} is given by
\begin{equation}\label{eq:repc}
	c=\begin{cases}
	0 \quad &\text{if $|P|\le\int_{0}^{1}\sqrt{V(z)}dz$,}\\
	a \quad   \text{such that $|P|=\int_{0}^{1}\sqrt{V(z)+a}dz$, $a\ge0$},&\text{otherwise.}
	\end{cases}
\end{equation}
As an analogue of this formula, we establish

\begin{proposition}\label{onedim}
\begin{itemize}
\item[(1)]
If $\overline{\sigma}m_{0}\ge\underline{\sigma}$, then $\mathcal{D}=\emptyset$.
\item[(2)]
If $\overline{\sigma}m_{0}<\underline{\sigma}$, then
\begin{equation*}
	\mathcal{D}=\begin{cases}
		(-\int_{0}^{1}f_{\underline{\sigma}}(z)dz,\int_{0}^{1}f_{\underline{\sigma}}(z)dz)\quad &\text{if $f_{\underline{\sigma}}\in L^{1}(0,1)$},\\
		\mathbf{R}\quad &\text{otherwise}.
	\end{cases}
\end{equation*}
\end{itemize}
Moreover, the critical value $c$ is given by
\begin{equation}\label{eq:cv}
	c=\begin{cases}
	\overline{\sigma}m_{0}&\text{if $|P|\le\int_{0}^{1}f_{\overline{\sigma}m_{0}}(z)dz$,}\\
	a \quad \text{such that $|P|=\int_{0}^{1}f_{a}(z)dz$}&\text{otherwise.}
	\end{cases}
\end{equation}
\end{proposition}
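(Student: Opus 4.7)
The plan is to dispose of Part (1) immediately from Proposition \ref{prop:est2}, then work under $\overline{\sigma}m_0 < \underline{\sigma}$ for Part (2). I will rewrite the cell problem as the one-dimensional eikonal equation $|u'(x)+P| = f_c(x)$ and introduce $F(c) := \int_0^1 f_c(z)\, dz$ on $[\overline{\sigma}m_0, \underline{\sigma})$. Because $m^{-1}$ is strictly increasing, $F$ is continuous and strictly increasing, with $F(c) \to \int_0^1 f_{\underline{\sigma}}(z)\, dz$ as $c \to \underline{\sigma}^-$ by monotone convergence, and $F(c) < \int_0^1 f_{\underline{\sigma}}$ strictly for each $c < \underline{\sigma}$.

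For the necessary direction, if $P \in \mathcal{D}$ with critical value $c$, then Proposition \ref{prop:est2} gives $c \in [\overline{\sigma}m_0, \underline{\sigma})$. By Rademacher's theorem, the Lipschitz solution $u$ is differentiable a.e., and a standard viscosity-solution argument shows $|u'(x)+P| = f_c(x)$ at points of differentiability. Writing $u'(x) = \varepsilon(x) f_c(x) - P$ for some measurable $\varepsilon \colon \mathbf{T} \to \{\pm 1\}$ and using $\int_0^1 u'(x)\, dx = 0$ from periodicity, I obtain $|P| \le F(c)$; when $f_{\underline{\sigma}} \in L^1(0,1)$ this forces $|P| < \int_0^1 f_{\underline{\sigma}}$.

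The heart of the proof is the sufficient direction, where I construct explicit viscosity solutions realising the candidate critical value prescribed by (5.1). If $|P| > F(\overline{\sigma}m_0)$, continuity and strict monotonicity of $F$ produce a unique $c_P \in (\overline{\sigma}m_0, \underline{\sigma})$ with $F(c_P) = |P|$ (provided $|P|$ lies in the range of $F$, i.e., $|P| < \int_0^1 f_{\underline{\sigma}}$ in the $L^1$ case), and then $u(x) := \int_0^x(\mathrm{sgn}(P) f_{c_P}(z) - P)\, dz$ is $C^1$, periodic, and a classical solution. If $|P| \le F(\overline{\sigma}m_0)$, set $c_P := \overline{\sigma}m_0$ and pick $x^* \in \mathbf{T}$ with $\sigma(x^*) = \overline{\sigma}$, so $f_{c_P}(x^*)=0$. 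After translating so that $x^* = 0$, I use the intermediate value theorem to choose $\tau \in [0,1]$ with $2\int_0^\tau f_{c_P}(z)\, dz - F(c_P) = P$, and define $u$ piecewise by $u'+P = f_{c_P}$ on $[0,\tau]$ and $u'+P = -f_{c_P}$ on $[\tau,1]$. By the choice of $\tau$ the function $u$ is $\mathbf{Z}$-periodic, and $f_{c_P}(0)=0$ makes $u$ match in $C^1$ fashion across the seam $x=0$. Away from $\tau$ the function is $C^1$, so the viscosity equation reduces to the classical one; at the kink $\tau$ the jump of $u'$ is downward (concave kink), so $D^-u(\tau) = \emptyset$ and the supersolution inequality holds vacuously, while every $p \in D^+u(\tau) = [-f_{c_P}(\tau)-P,\, f_{c_P}(\tau)-P]$ satisfies $|p+P| \le f_{c_P}(\tau)$, giving the subsolution inequality.

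Once a solution with critical value $c_P$ has been produced, Proposition \ref{prop:cpcp} identifies $c_P$ as the unique critical value, confirming (5.1). For $P$ outside the claimed $\mathcal{D}$ (i.e., $|P| \ge \int_0^1 f_{\underline{\sigma}}$ when $f_{\underline{\sigma}} \in L^1$), the necessary-direction bound gives $|P| \le F(c) < \int_0^1 f_{\underline{\sigma}}$, a contradiction. The main obstacle I anticipate is verifying that the sign-switching construction in the case $|P| \le F(\overline{\sigma}m_0)$ yields a true viscosity solution rather than merely an a.e.\ one; the key device is anchoring the construction at a zero of $f_{c_P}$, which forces the only unavoidable kink to be concave and eliminates the ``bad'' convex-kink scenario that would otherwise appear at the periodic seam.
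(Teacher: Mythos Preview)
Your proposal is correct and follows essentially the same approach as the paper: Part (1) via Proposition \ref{prop:est2}, the inclusion $\mathcal{D}\subset\widetilde{\mathcal{D}}$ via periodicity of $u$ and the a.e.\ bound $|u'(x)+P|\le f_c(x)$, and the reverse inclusion via the same two explicit constructions (the sign-definite primitive when $|P|>F(\overline{\sigma}m_0)$, and the piecewise construction anchored at a maximum point of $\sigma$ otherwise). Your write-up is in fact more careful than the paper's at one point---the paper says only ``it is easy to see that $u$ is a solution'' for the piecewise case, whereas you explicitly check that the anchoring at a zero of $f_{\overline{\sigma}m_0}$ makes the seam $C^1$ and that the remaining kink at $\tau$ is concave, so $D^-u(\tau)=\emptyset$ and the supersolution condition is vacuous there.
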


\begin{proof}
(1) This is obvious by Proposition \ref{prop:est2}.

(2)  We set $\widetilde{\mathcal{D}}=(-\int_{0}^{1}f_{\underline{\sigma}}(z)dz,\int_{0}^{1}f_{\underline{\sigma}}(z)dz)$.
When $f_{\underline{\sigma}}\notin L^{1}(0,1)$, we read $\widetilde{\mathcal{D}}=\mathbf{R}$. We first prove $\mathcal{D}\supset\widetilde{\mathcal{D}}$.
To do this, take $P\in\widetilde{\mathcal{D}}$.
What we have to do is to find $u\in{\rm Lip}(\mathbf{T})$ such that $(u,c)$ is a solution of \eqref{eq:eikonal}, where $c$ is the constant in \eqref{eq:cv}.

When $|P|\le\int_{0}^{1}f_{\overline{\sigma}m_{0}}(z)dz$, we set
\begin{equation*}
	u(x)=\begin{cases}
		\displaystyle\int_{x_{0}}^{x}f_{\overline{\sigma}m_{0}}(z)dz-P x\quad &\text{for $x\in[x_{0},x_{1}]$,}\\
		\displaystyle\int_{x}^{x_{0}+1}f_{\overline{\sigma}m_{0}}(z)dz+P(1-x)\quad&\text{for $x\in[x_{1},x_{0}+1]$.}
	\end{cases}
\end{equation*}
Here, $x_{0}\in[0,1]$ and $x_{1}\in[x_{0},x_{0}+1]$ are points such that
\begin{equation*}
f_{\overline{\sigma}m_{0}}(x_{0})=0,\quad\int_{x_{0}}^{x_{1}}f_{\overline{\sigma}m_{0}}(z)dz=\int_{x_{1}}^{x_{0}+1}f_{\overline{\sigma}m_{0}}(z)dz+P.
\end{equation*}
We regard $u$ as a function on $\mathbf{T}$ by extending it periodically. 
Then, it is easy to see that $u$ is a solution of \eqref{eq:eikonal}. 

When $|P|\ge\int_{0}^{1}f_{\overline{\sigma}m_{0}}(z)dz$, for $c$ chosen by \eqref{eq:cv}, we set
\begin{equation*}
u(x)=\sign(P)\int_{0}^{x}f_{c}(z)dz-P x \quad \text{for $x\in\mathbf{R}$.}
\end{equation*}
Note that $u$ is a $\mathbf{Z}$-periodic function since, by the definition of $c$,
\begin{equation*}
\sign(P)\int_{0}^{1}f_{c}(z)dz-P=0.
\end{equation*}
Then, it is easy to see that $u$ is a solution of \eqref{eq:eikonal}. 
Therefore, we have obtained $\mathcal{D}\supset\widetilde{\mathcal{D}}$.

We next show the reverse inclusion $\mathcal{D}\subset\widetilde{\mathcal{D}}$. 
Let $P\in\mathcal{D}$ and take a solution $(u,c)$ of \eqref{eq:eikonal}, then
\begin{equation*}
|P|=\left|\int_{0}^{1}(u'(z)+P)dz\right|\le\int_{0}^{1}|u'(z)+P|dz\le\int_{0}^{1}f_{c}(z)dz<\int_{0}^{1}f_{\underline{\sigma}}(z)dz.
\end{equation*}
The first equality follows from the periodicity of $u$. Thus, $P\in\widetilde{\mathcal{D}}$ and so the proof is complete.
\end{proof}

\begin{remark}
The representation of the critical value \eqref{eq:cv}
is also obtained via the formula \eqref{eq:repc} given in \cite{LPV}.
In fact, $a$ is a critical value of \eqref{cp}
if and only if the critical value $c_a$ of 
\begin{equation*}
|u^{\prime}(x)+P|=f_{a}(x)+c_a \quad\text{in $\mathbf{T}$}
\end{equation*}
is equal to $0$.
It is easily seen that the condition $c_a=0$ yields \eqref{eq:cv}.
\end{remark}

When $\sigma$ attains a minimum on some interval $[a,b]$ with $a<b$, it is easily seen that $f_{\underline{\sigma}}$ is not integrable since $f_{\underline{\sigma}}=+\infty$ on $[a,b]$. Consequently, \eqref{eq:eikonal} is fully solvable by Proposition \ref{onedim}.
If $\sigma(x)=\underline{\sigma}$ at only one point $x\in\mathbf{T}$, the integrability of $f_{\underline{\sigma}}$ depends on $\sigma$ and $m$ as the next examples indicate.

\begin{example}
Let us consider \eqref{eq:eikonal} with
\begin{equation*}
m(r)=\frac{1}{2}\frac{r}{1+r}+\frac{1}{2}\quad(r\in[0,\infty)),
\quad\sigma(x)=x^{\alpha}(1-x)^{\alpha}+\beta\quad(x\in[0,1]),
\end{equation*}
where $\alpha,\beta>0$. 
We note that $\overline{\sigma}m_{0}<\underline{\sigma}$ holds when $\beta>1/4^{\alpha}$. 
Since
\begin{equation*}
f_{\underline{\sigma}}(x)=\frac{\underline{\sigma}}{2}\frac{1}{\sigma(x)-\underline{\sigma}}-1=\frac{\underline{\sigma}}{2}\frac{1}{x^{\alpha}(1-x)^{\alpha}}-1,
\end{equation*} 
the integrability of $f_{\underline{\sigma}}$ is determined by the choice of $\alpha>0$.
\end{example}

\begin{example}\label{exa2}
Set
\begin{equation*}
\sigma(x)=
\begin{cases}
x+\frac{3}{2}\quad&(0\le x<\frac{1}{2}),\\
-x+\frac{5}{2}\quad&(\frac{1}{2}\le x<1).
 \end{cases}
\end{equation*}

(1)  We let
\begin{equation*}
m(r)=\frac{1}{2}\frac{r}{1+r}+\frac{1}{2}.
\end{equation*}
We extend $\sigma$ periodically to $\mathbf{R}$ and still denote it by $\sigma$. 
Note that $\overline{\sigma}m_{0}<\underline{\sigma}$ holds. 
Since
\begin{equation*}
f_{\underline{\sigma}}(x)=\frac{\underline{\sigma}}{2}\frac{1}{\sigma(x)-\underline{\sigma}}-\frac{1}{2},
\end{equation*}
we observe that
\begin{equation*}
\int_{0}^{1}f_{\underline{\sigma}}(z)dz=2\int_{0}^{1/2}f_{\underline{\sigma}}(z)dz=\underline{\sigma}\int_{0}^{1/2}\frac{1}{z}dz-\frac{1}{2}=\infty.
\end{equation*}
Thus, $\mathcal{D}=\mathbf{R}$, i.e., the cell problem is fully solvable.

(2)  We next study 
\begin{equation*}
m(r)=\frac{1}{2}\tanh r+\frac{1}{2}\quad(r\in[0,\infty)).
\end{equation*}
Note that $\overline{\sigma}m_{0}<\underline{\sigma}$. Since
\begin{equation*}
f_{\underline{\sigma}}(x)=\frac{1}{2}\log\left(\frac{\underline{\sigma}}{\sigma(x)-\underline{\sigma}}\right)=\frac{1}{2}\{\log\underline{\sigma}-\log(\sigma(x)-\underline{\sigma})\},
\end{equation*}
we observe that
\begin{equation*}
\int_{0}^{1}f_{\underline{\sigma}}(z)dz=2\int_{0}^{1/2}f_{\underline{\sigma}}(z)dz=1+\frac{1}{2}\log6.
\end{equation*}
Therefore, $\mathcal{D}=\left(-1-\frac{1}{2}\log6,1+\frac{1}{2}\log6\right)$. 
\end{example}

\begin{proposition}
We have
\begin{equation*}
\bar{H}_{\infty}(P)>\bar{H}_{\infty}(Q)\quad \text{for all $P,Q\in\mathcal{D}$ such that $|P|>|Q|\ge\int_{0}^{1}f_{\overline{\sigma}m_{0}}(z)dz$.}
\end{equation*}
\end{proposition}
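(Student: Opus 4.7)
The plan is to deduce the statement directly from the representation of the critical value given in Proposition \ref{onedim} and the identification of $\bar{H}_{\infty}$ with the critical value from Theorem \ref{thm:2}. Since both $P,Q \in \mathcal{D}$, we know from Theorem \ref{thm:2} that $\bar{H}_{\infty}(P)$ and $\bar{H}_{\infty}(Q)$ are both strictly less than $\underline{\sigma}$ and, by the one-dimensional formula \eqref{eq:cv}, they are at least $\overline{\sigma}m_{0}$. Moreover, note that the formula \eqref{eq:cv} in the two regimes agrees at the threshold $|P|=\int_{0}^{1}f_{\overline{\sigma}m_{0}}(z)dz$ because $f_{\overline{\sigma}m_{0}}$ is the integrand corresponding to $a=\overline{\sigma}m_{0}$. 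Hence for every $R\in\mathcal{D}$ with $|R|\ge \int_{0}^{1}f_{\overline{\sigma}m_{0}}(z)dz$ we may write in a unified way
\begin{equation*}
|R|=\int_{0}^{1}f_{\bar{H}_{\infty}(R)}(z)dz.
\end{equation*}

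The key step is then to establish strict monotonicity of the map $a\mapsto \int_{0}^{1}f_{a}(z)dz$ on the relevant range $[\overline{\sigma}m_{0},\underline{\sigma})$. Recall that $f_{a}(x)=m^{-1}(a/\sigma(x))$, where $m^{-1}\colon [m_{0},1)\to[0,\infty)$ is well-defined and strictly increasing thanks to (H3). For $\overline{\sigma}m_{0}\le a_{1}<a_{2}<\underline{\sigma}$ and any $x\in\mathbf{T}$, one checks that $a_{1}/\sigma(x),a_{2}/\sigma(x)\in[m_{0},1)$ with $a_{1}/\sigma(x)<a_{2}/\sigma(x)$, so the strict monotonicity of $m^{-1}$ yields $f_{a_{1}}(x)<f_{a_{2}}(x)$ pointwise. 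Since this strict inequality holds on the whole of $(0,1)$, integration gives
\begin{equation*}
\int_{0}^{1}f_{a_{1}}(z)dz<\int_{0}^{1}f_{a_{2}}(z)dz.
\end{equation*}

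Finally, combining the two displays: from $|P|>|Q|$ together with
\begin{equation*}
\int_{0}^{1}f_{\bar{H}_{\infty}(P)}(z)dz=|P|>|Q|=\int_{0}^{1}f_{\bar{H}_{\infty}(Q)}(z)dz,
\end{equation*}
the strict monotonicity of $a\mapsto \int_{0}^{1}f_{a}(z)dz$ forces $\bar{H}_{\infty}(P)>\bar{H}_{\infty}(Q)$, as desired. There is no real obstacle in this argument; the only mild care needed is to handle the boundary case $|Q|=\int_{0}^{1}f_{\overline{\sigma}m_{0}}(z)dz$, which is harmless because the two branches of \eqref{eq:cv} coincide there, so the unified identity $|Q|=\int_{0}^{1}f_{\bar{H}_{\infty}(Q)}(z)dz$ remains valid with $\bar{H}_{\infty}(Q)=\overline{\sigma}m_{0}$.
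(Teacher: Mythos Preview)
Your proof is correct and follows essentially the same approach as the paper: both use the representation \eqref{eq:cv} to write $|P|=\int_0^1 f_{\bar H_\infty(P)}(z)\,dz$ and $|Q|=\int_0^1 f_{\bar H_\infty(Q)}(z)\,dz$, and then conclude from the strict monotonicity of $a\mapsto f_a(x)$. The paper's proof is simply a terser version of yours; you have made explicit the monotonicity argument and the boundary case $|Q|=\int_0^1 f_{\overline\sigma m_0}(z)\,dz$, both of which the paper leaves implicit.
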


\begin{proof}
By \eqref{eq:cv}, we observe
\begin{equation*}
\int_{0}^{1}\{f_{\bar{H}_{\infty}(P)}(z)-f_{\bar{H}_{\infty}(Q)}(z)\}dz=|P|-|Q|>0,
\end{equation*}
which implies that $\bar{H}_{\infty}(P)>\bar{H}_{\infty}(Q)$.
\end{proof}

%--------------------------------------
%             Section 5
%--------------------------------------
\section{Application to homogenization problems}

We present our homogenization result for the equation \eqref{HJe}
with the Hamiltonian \eqref{eq:ourH} satisfying (H1)--(H3).
Here, $u_{0}:\mathbf{R}^{N}\to\mathbf{R}$ is a bounded and Lipschitz continuous initial datum.
We remark that there exists a unique bounded solution $u^{\varepsilon}\in C(\mathbf{R}^{N}\times[0,T))$ of \eqref{HJe}. Similarly, there exists a unique bounded solution $u\in C(\mathbf{R}^{N}\times[0,T))$ of \eqref{HJ}.
Indeed, the comparison principle holds for a viscosity sub- and supersolution (see \cite{CIL}). 
This yields uniqueness of solutions. 
Existence is a consequence of Perron's method (see \cite{I}).

\begin{theorem}[Homogenization result]\label{thm:hom}
Assume either
\begin{equation*}
(1)\ \ \mathcal{D}=\mathbf{R}^{N}\quad{\rm or}\quad(2)\ \ m({\rm Lip}[u_{0}])<\underline{\sigma}/\overline{\sigma}.
\end{equation*}
Then the solution $u^{\varepsilon}$ of \eqref{HJe} converges to the solution $u$ of \eqref{HJ} locally uniformly in $\mathbf{R}^{N}\times[0,T)$ as $\varepsilon\to0$.
\end{theorem}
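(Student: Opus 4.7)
The strategy splits along the two hypotheses, both of which are handled via the half-relaxed limit method. Since $H$ is bounded, the family $\{u^{\varepsilon}\}$ is uniformly bounded on $\mathbf{R}^{N}\times[0,T)$ (compare $u^{\varepsilon}$ with the barriers $u_{0}(x)\pm\overline{\sigma}t$), so the upper and lower half-relaxed limits $\overline{u},\underline{u}$ are well defined and satisfy $\overline{u}(\cdot,0)\le u_{0}\le\underline{u}(\cdot,0)$. The plan is to prove that $\overline{u}$ is a viscosity subsolution and $\underline{u}$ a viscosity supersolution of \eqref{HJ}. Since $\bar{H}_{\infty}$ is Lipschitz continuous by Proposition \ref{prop:eeh} (1), the comparison principle for \eqref{HJ} then forces $\overline{u}=\underline{u}=u$, giving the desired local uniform convergence.

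Under hypothesis (1), the assumption $\mathcal{D}=\mathbf{R}^{N}$ provides, for every $P\in\mathbf{R}^{N}$, a Lipschitz corrector $v_{P}$ solving $H(y,Dv_{P}+P)=\bar{H}_{\infty}(P)$ on $\mathbf{T}^{N}$ (by Theorem \ref{thm:2}). Given a smooth $\phi$ such that $\overline{u}-\phi$ has a strict local maximum at $(x_{0},t_{0})$, set $P:=D\phi(x_{0},t_{0})$ and form Evans' perturbed test function $\phi^{\varepsilon}(x,t):=\phi(x,t)+\varepsilon v_{P}(x/\varepsilon)$. Since $\varepsilon v_{P}(\cdot/\varepsilon)\to 0$ uniformly, $u^{\varepsilon}-\phi^{\varepsilon}$ attains a local maximum at some $(x_{\varepsilon},t_{\varepsilon})\to(x_{0},t_{0})$; combining the subsolution inequality for $u^{\varepsilon}$ at this point with the cell-problem identity for $v_{P}$ (handled rigorously via an inf-convolution regularisation of the Lipschitz $v_{P}$) yields $\phi_{t}(x_{0},t_{0})+\bar{H}_{\infty}(P)\le 0$ in the limit $\varepsilon\to 0$. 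The symmetric argument gives the supersolution property of $\underline{u}$, settling case (1).

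Under hypothesis (2), the cell problem may fail for some $P\in\mathbf{R}^{N}$, so the perturbed test function is not directly available, and the plan is instead to reduce the original equation to one with a coercive Hamiltonian. The main step is to establish that $\{u^{\varepsilon}\}_{\varepsilon>0}$ is equi-Lipschitz continuous in $x$ with constant $L_{0}:=\mathrm{Lip}[u_{0}]$; equi-Lipschitz continuity in $t$ is immediate from $|u^{\varepsilon}_{t}|\le\overline{\sigma}$. Granted this, by (B3) in Remark \ref{re:approx} we may choose $n$ large enough that $M_{n}(r)=m(r)$ on $[0,L_{0}]$; then $H_{n}(y,p)=H(y,p)$ for $|p|\le L_{0}$, and since $|Du^{\varepsilon}|\le L_{0}$ in the viscosity sense, $u^{\varepsilon}$ is also a viscosity solution of the Hamilton-Jacobi equation with the coercive Hamiltonian $H_{n}$ in place of $H$. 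Case (1) then applies to this modified equation (its cell problem is fully solvable by Proposition \ref{prop:ccp}), giving $u^{\varepsilon}\to u_{n}$ locally uniformly, where $u_{n}$ solves the analogue of \eqref{HJ} with $\bar{H}_{n}$ in place of $\bar{H}_{\infty}$. The limit $u_{n}$ inherits the bound $|Du_{n}|\le L_{0}$, and by the remark following Theorem \ref{thm:2} we have $\bar{H}_{n}=\bar{H}_{\infty}$ on $\overline{B(0,L_{0})}$ for $n$ sufficiently large, so $u_{n}$ also solves \eqref{HJ}, and uniqueness forces $u_{n}=u$.

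The main obstacle will be the equi-Lipschitz estimate in case (2). The boundedness of $H$ alone yields only a spatial modulus of continuity, and a direct space-doubling argument on $u^{\varepsilon}(x,t)-u^{\varepsilon}(y,t)-L_{0}|x-y|$ produces, at an interior maximum, an uncontrolled residual of the form $m(L_{0})[\sigma(x_{0}/\varepsilon)-\sigma(y_{0}/\varepsilon)]$, because $x_{0}/\varepsilon$ and $y_{0}/\varepsilon$ can lie at arbitrary phases of the periodic pattern of $\sigma$. The hypothesis $\overline{\sigma}m(L_{0})<\underline{\sigma}$ must therefore be used more carefully, most likely through global comparison with time-independent barriers $u_{0}(y)\pm L_{0}|x-y|$ taken over all basepoints $y$, so that the oscillation of $\sigma(\cdot/\varepsilon)$ is absorbed into the $L_{0}$-Lipschitz envelope of $u_{0}$ rather than left as a residual.
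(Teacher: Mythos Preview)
Your overall architecture matches the paper's: case (1) via half-relaxed limits and Evans' perturbed test function, case (2) by first establishing equi-Lipschitz continuity of $\{u^{\varepsilon}\}$ and then replacing $H$ by a coercive $H_{n}$ that agrees with $H$ on the relevant gradient range, so that the coercive homogenization result applies. (The paper gives two variants of case (2); your reduction is essentially its Proof~II.)

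The genuine gap is the equi-Lipschitz estimate, which you correctly flag as the obstacle but do not close. Two points. First, the spatial Lipschitz constant need not be $L_{0}=\mathrm{Lip}[u_{0}]$; the paper (Proposition~\ref{prop:reg}, following \cite{GH}) obtains the larger constant $K=m^{-1}\bigl((\overline{\sigma}/\underline{\sigma})\,m(L_{0})\bigr)$, finite precisely because hypothesis~(2) gives $(\overline{\sigma}/\underline{\sigma})m(L_{0})<1$. This does not break your reduction---just take $n$ so that $M_{n}=m$ on $[0,K]$---but the target $L_{0}$ is too optimistic when $\sigma$ is nonconstant. Second, and more seriously, your proposed lower barrier $u_{0}(y)-L_{0}|x-y|$ is \emph{not} a subsolution of \eqref{HJe}: since $H=\sigma m>0$ everywhere, no time-independent function can be a subsolution, so this barrier pair cannot produce a two-sided spatial estimate. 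The route that works is instead to obtain the time-Lipschitz bound $L=\overline{\sigma}m(L_{0})$ first (comparing $u^{\varepsilon}$ with the supersolution $u_{0}$ and the subsolution $u_{0}-Lt$), and then deduce the spatial bound from the equation itself, since in the viscosity sense $m(|Du^{\varepsilon}|)\le L/\underline{\sigma}<1$ forces $|Du^{\varepsilon}|\le K$. This is where hypothesis~(2) is actually consumed, and it bypasses the phase-mismatch issue you identified in the doubling attempt.
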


\begin{proof}[Proof of Theorem \ref{thm:hom} under the assumption (1)]

Recall that, for each $P\in\mathcal{D}$, $\bar{H}_{\infty}(P)$ is the critical value of \eqref{cp} from Theorem \ref{thm:2}.
As we mentioned in Introduction the assumption (1) means that the cell problem is fully solvable,
and so the conclusion follows from the same argument
as in \cite{E} involving the perturbed test function method.
Here we do not need equi-Lipschitz continuity of $\{ u_{\varepsilon} \}$
since the half-relaxed limit method works for our equation;
see \cite[Proof of Theorem 4.4]{E}.
To be more precise, it turns out that
the upper- and lower half-relaxed limits
\begin{align*}
\overline{u}(x,t) &:= \lim_{\delta \to 0}\sup\{ u^{\varepsilon}(y,s) \mid (y,s) \in B(x, \delta)\times(t-\delta, t+\delta),\ \varepsilon < \delta \}, \\
\underline{u}(x,t) &:= \lim_{\delta \to 0}\inf\{ u^{\varepsilon}(y,s) \mid (y,s) \in B(x, \delta)\times(t-\delta, t+\delta),\ \varepsilon < \delta \}
\end{align*}
are, respectively, a sub- and supersolution of (HJ),
so that the comparison principle ensures that these two limits
are equal to the solution $u$.
This implies locally uniform convergence to $u$.

\end{proof}

We shall hereafter prove Theorem \ref{thm:hom} under the assumption (2).

\begin{proposition}[Regularity of the solution of \eqref{HJe}]
\label{prop:reg}
Assume (2) in Theorem \ref{thm:hom}. Then, the solutions $u^{\varepsilon}$ of \eqref{HJe} satisfy
\begin{equation*}
|u^{\varepsilon}(x, t) - u^{\varepsilon}(x, s)| \le L|t-s|, \quad |u^{\varepsilon}(x, t) - u^{\varepsilon}(y, t)| \le K|x-y|
\end{equation*}
for all $x, y \in \mathbf{R}^N, t, s \in [0, T)$ with the constants
\begin{equation*}
L := \overline{\sigma}m({\rm Lip}[u_0]) < \infty, \quad K := m^{-1}\left(\frac{\overline{\sigma}}{\underline{\sigma}}m({\rm Lip}[u_0])\right) < \infty.
\end{equation*}
\end{proposition}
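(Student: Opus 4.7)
The plan is to first establish the time-Lipschitz estimate via a direct comparison and then extract the spatial one by reading the equation.

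For the time regularity, I observe that $v^+(x,t) := u_0(x) + Lt$ is a viscosity supersolution of (HJ$_\varepsilon$): at a minimum-touching point $(x_0, t_0)$ by $\phi \in C^1$, the time slice forces $\phi_t(x_0, t_0) = L$, and then $\phi_t + \sigma(x_0/\varepsilon) m(|D_x\phi|) \ge L > 0$ is automatic. Symmetrically, $v^-(x,t) := u_0(x) - Lt$ is a subsolution: the time slice yields $\phi_t = -L$ while the space slice yields $|D_x\phi(x_0, t_0)| \le {\rm Lip}[u_0]$, so $\phi_t + \sigma m \le -L + \overline{\sigma} m({\rm Lip}[u_0]) = 0$ by the very definition of $L$. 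Since $v^\pm(\cdot, 0) = u_0 = u^\varepsilon(\cdot, 0)$, comparison yields $|u^\varepsilon(x, t) - u_0(x)| \le Lt$. The full time-Lipschitz estimate then follows from the standard contraction property of (HJ$_\varepsilon$): since the equation is invariant under adding constants, the comparison principle applied to $(x, t) \mapsto u^\varepsilon(x, t+s)$ and $u^\varepsilon(\cdot, \cdot) \pm \|u^\varepsilon(\cdot, s) - u_0\|_\infty$ gives $\|u^\varepsilon(\cdot, t+s) - u^\varepsilon(\cdot, t)\|_\infty \le \|u^\varepsilon(\cdot, s) - u_0\|_\infty \le Ls$.

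For the spatial regularity, assumption (2) ensures $L/\underline{\sigma} < 1$, so $K = m^{-1}(L/\underline{\sigma})$ is well-defined and finite. Formally, reading the equation and using $|u^\varepsilon_t| \le L$ gives $\underline{\sigma}\, m(|Du^\varepsilon|) \le -u^\varepsilon_t \le L$, hence $|Du^\varepsilon| \le K$. To make this rigorous, I fix $t_0 \in (0, T)$ and $\phi \in C^1(\mathbf{R}^N)$ so that $u^\varepsilon(\cdot, t_0) - \phi$ has a strict local maximum at $x_0$ (reduced from the general case by adding $\delta|x - x_0|^2$), and test $u^\varepsilon$ against the smoothed, penalized function
\[
\Phi_\eta(x, t) := \phi(x) + L\Bigl(\sqrt{(t - t_0)^2 + \eta^2} - \eta\Bigr) + C(t - t_0)^2
\]
for $C$ large. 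The time-Lipschitz bound implies $u^\varepsilon - \Phi_\eta$ attains a local maximum at some $(x_\eta, t_\eta)$; the strictness at $x_0$ forces $x_\eta \to x_0$, while the quadratic localizer forces $|t_\eta - t_0| = O(\sqrt{\eta})$. At $(x_\eta, t_\eta)$ the subsolution inequality reads
\[
\sigma(x_\eta/\varepsilon)\, m(|D\phi(x_\eta)|) \le -\partial_t \Phi_\eta(x_\eta, t_\eta) \le L + O(\sqrt{\eta}),
\]
and sending $\eta \to 0$ yields $m(|D\phi(x_0)|) \le L/\underline{\sigma}$, i.e., $|D\phi(x_0)| \le K$. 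Since this bound holds for every $C^1$ test function from above, $u^\varepsilon(\cdot, t_0)$ is $K$-Lipschitz.

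The main obstacle is the second step: although the formal derivation $\sigma m(|Du^\varepsilon|) = -u^\varepsilon_t$ is transparent, converting the viscosity time-Lipschitz bound into a rigorous spatial gradient bound requires a $C^1$ test function that both catches the one-sided Lipschitz envelope of $u^\varepsilon$ in $t$ tightly enough to recover the constant $L$ in the limit and localizes the maximum near $(x_0, t_0)$; the function $\Phi_\eta$ combines a smooth approximation of $L|t - t_0|$ with a quadratic localizer to accomplish both at once.
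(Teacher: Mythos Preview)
The paper does not supply its own proof of this proposition; it defers entirely to \cite[Appendix~A]{GH}. Your direct argument is essentially correct and self-contained.

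The time-Lipschitz step (barriers $u_0 \pm Lt$ together with the $L^\infty$-contraction in $t$) is standard and carried out cleanly. For the spatial bound, your test function $\Phi_\eta$ is a legitimate device to turn the formal computation ``$\underline{\sigma}\, m(|Du^\varepsilon|) \le -u^\varepsilon_t \le L$'' into a viscosity statement: the smoothed $L|t-t_0|$-term absorbs the time oscillation symmetrically (a one-sided linear term $(L+\epsilon')(t-t_0)$ would not, which is why your choice is the right one), while the quadratic penalty pins $t_\eta$ to $t_0$ at rate $O(\sqrt{\eta})$. Two small points are worth making explicit. First, ``$C$ large'' is not actually needed; any fixed $C>0$ gives $|t_\eta-t_0|\le\sqrt{L\eta/C}\to 0$. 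Second, and more substantively, you leave the final implication unproved: that $|p|\le K$ for every $p\in D^+\bigl(u^\varepsilon(\cdot,t_0)\bigr)(x_0)$ and every $x_0$ forces $u^\varepsilon(\cdot,t_0)$ to be $K$-Lipschitz. This is a well-known fact for \emph{bounded} continuous functions (compare $u^\varepsilon(\cdot,t_0)$ with the cone $x\mapsto u^\varepsilon(x_0,t_0)+(K+\epsilon)|x-x_0|$; a global maximizer of the difference other than $x_0$ would exhibit an element of $D^+$ with norm $K+\epsilon$), and since $u^\varepsilon$ is bounded it applies here, but it deserves a one-line justification or a reference.
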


We omit the proof since this proposition is verified by the same argument as in \cite[Appendix A]{GH}.
We point out that \cite[Proposition 3.17]{GH} holds under the assumption $R_+(m) < \infty$ even if the Hamiltonian does not satisfy the coercivity condition (H$_{R+}$).

We give two different proofs of Theorem \ref{thm:hom} under the assumption (2).

\begin{proof}[Proof I of Theorem \ref{thm:hom} under the assumption (2)]
By Proposition \ref{prop:reg}, Ascoli-Arzel\`{a} theorem implies that $u^{\varepsilon}$ subsequently converges to some Lipschitz continuous function $u$ locally uniformly in $\mathbf{R}^{N}\times[0,T)$ as $\varepsilon\to0$.

We prove that $u$ is a supersolution of \eqref{HJ}. 
The proof is based on the perturbed test function method (see \cite{E}). 
Let $(x_{0},t_{0})\in\mathbf{R}^{N}\times(0,T)$ and $\phi\in C^{1}(\mathbf{R}^{N}\times(0,T))$ such that $u-\phi$ has a strict local minimum at $(x_{0},t_{0})$. 
Suppose that
\begin{equation*}
\phi_{t}(x_{0},t_{0})+\bar{H}_{\infty}(D\phi(x_{0},t_{0}))=:-\theta<0.
\end{equation*} 
We take $\{H_{n}\}_{n\in\mathbf{N}}$ as Remark \ref{re:approx} and let $\bar{H}_{n}$ be the effective Hamiltonian of \eqref{CPn}. 
Since $\bar{H}_{n}$ converges to $\bar{H}_{\infty}$, we have
\begin{equation}
\phi_{t}(x_{0},t_{0})+\bar{H}_{n}(D\phi(x_{0},t_{0}))\le-\frac{\theta}{2}
\end{equation}
for sufficiently large $n$. 
On the other hand, by the Lipschitz continuity of $u^{\varepsilon}$  and (B3), we see that it is a solution of 
\begin{equation}\label{eq:n-eq}
w_{t}(x,t)+H_{n}\left(\frac{x}{\varepsilon},Dw(x,t)\right)=0
\end{equation}
in $\mathbf{R}^{N}\times(0,T)$ for sufficiently large $n$. We hereafter fix $n$ satisfying the above two conditions.

Set
\begin{equation*}
\phi^{\varepsilon}_{n}(x,t):=\phi(x,t)+\varepsilon v_{n}\left(\frac{x}{\varepsilon}\right),
\end{equation*}
where $v_{n}$ is a solution of \eqref{CPn}. 
By the same argument as in \cite{E}, we see that $\phi^{\varepsilon}_{n}$ is a subsolution of \eqref{eq:n-eq} in $B(x_{0},r)\times(t_{0}-r,t_{0}+r)$ for sufficiently small $r>0$.
The comparison principle for \eqref{HJe} implies a contradiction (see \cite{E}) and so $u$ is a supersolution of \eqref{HJ}.

Similarly, it is proved that $u$ is a subsolution of \eqref{HJ}, and therefore, $u$ is a unique solution of \eqref{HJ}.
Consequently, $u^{\varepsilon}$ converges to $u$ locally uniformly in $\mathbf{R}^{N}\times[0,T)$ as $\varepsilon\to0$ without taking subsequences.
\end{proof}

\begin{proof}[Proof II of Theorem \ref{thm:hom} under the assumption (2)]
Recall that $\{u^{\varepsilon}\}_{\varepsilon > 0}$ is equi-Lipschitz continuous in view of Proposition \ref{prop:reg}
and therefore subsequently converges to some $u$ locally uniformly in $\mathbf{R}^{N}\times[0,T)$ as $\varepsilon\to0$.
Take $\{H_{n}\}_{n\in\mathbf{N}}$ as Remark \ref{re:approx}.
By (B3) and the equi-Lipschitz continuity of $\{u^{\varepsilon}\}_{\varepsilon > 0}$,
we have
\begin{equation*}
u^{\varepsilon}_t(x, t)+H_{n}\left(\frac{x}{\varepsilon}, D u^{\varepsilon}(x, t)\right)=0 \quad \text{in $\mathbf{R}^{N}\times(0, T)$}
\end{equation*}
for all $n \in \mathbf{N}$ large enough and  all $\varepsilon > 0$.
We now apply the homogenization result for coercive Hamiltonians \cite{E} to see that $u^{\varepsilon}$ converges to the solution $w_{n}$ of
\begin{equation*}
	\begin{cases}
		\displaystyle w_{t}(x,t)+\bar{H}_{n}\left(D w(x,t)\right)=0\quad&\text{in $\mathbf{R}^{N}\times(0,T)$,}\\
		w(x,0)=u_{0}(x)\quad&\text{in $\mathbf{R}^{N}$}
	\end{cases}
\end{equation*}
locally uniformly in $\mathbf{R}^{N}\times[0,T)$.
Since $u$ is a limit of a subsequence,
it turns out that $w_{n} \equiv u$.
Since $\bar{H}_{n}$ converges to $\bar{H}_{\infty}$ locally uniformly,
the stability result for viscosity solutions yields the conclusion that $u$ is a viscosity solution of \eqref{HJ}.
\end{proof}

\begin{remark}
The main difference between two proofs is the order of limits of $\varepsilon$ and $n$.
We point out that Proof I does not require the locally uniform convergence of $\bar{H}_{n}$.
However, we need the equi-Lipschitz continuity of $\{u^{\varepsilon}\}_{\varepsilon > 0}$ in both proofs
in order to ensure that $u^{\varepsilon}$ is a solution of the approximate equation.
\end{remark}

\begin{theorem}[Non-homogenization result]\label{non-hom}
Assume that $\overline{\sigma}m_{0}> \underline{\sigma}$. Let $u^{\varepsilon}$ be the solutions of \eqref{HJe}.  Then, $u^{\varepsilon}$ does not have a locally uniformly convergent limit in $\mathbf{R}^{N}\times[0,T)$ as $\varepsilon\to0$.
\end{theorem}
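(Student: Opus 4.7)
The plan is to sandwich $u^{\varepsilon}$ between two explicit barriers whose values on the microscopic scale become incompatible as $\varepsilon\to0$ precisely when $\overline{\sigma}m_{0}>\underline{\sigma}$. Define
\[
v_{1}^{\varepsilon}(x,t):=u_{0}(x)-\sigma(x/\varepsilon)m_{0}t,
\qquad
v_{2}^{\varepsilon}(x,t):=u_{0}(x)-\sigma(x/\varepsilon)t.
\]
First I would verify that $v_{1}^{\varepsilon}$ is a viscosity supersolution and $v_{2}^{\varepsilon}$ a viscosity subsolution of \eqref{HJe}. The key observation is that whenever a smooth test function $\phi$ touches $v_{1}^{\varepsilon}$ from below at $(x_{0},t_{0})$, the restriction $t\mapsto v_{1}^{\varepsilon}(x_{0},t)$ is affine with slope $-\sigma(x_{0}/\varepsilon)m_{0}$, so by differentiating $t\mapsto v_{1}^{\varepsilon}(x_{0},t)-\phi(x_{0},t)$ at the one-dimensional local min $t_{0}$ we see that $\phi_{t}(x_{0},t_{0})=-\sigma(x_{0}/\varepsilon)m_{0}$ no matter what $D\phi(x_{0},t_{0})$ happens to be. The bound $m(r)\ge m_{0}$ then yields
\[
\phi_{t}+H(x_{0}/\varepsilon,D\phi)
=\sigma(x_{0}/\varepsilon)\bigl(m(|D\phi|)-m_{0}\bigr)\ge 0.
\]
The analogous check for $v_{2}^{\varepsilon}$ uses $m(r)\le 1$. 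Since $v_{1}^{\varepsilon}(\cdot,0)=u_{0}=v_{2}^{\varepsilon}(\cdot,0)$, the standard comparison principle for \eqref{HJe} yields the sandwich
\[
u_{0}(x)-\sigma(x/\varepsilon)t
\le u^{\varepsilon}(x,t)\le
u_{0}(x)-\sigma(x/\varepsilon)m_{0}t
\qquad\text{on }\mathbf{R}^{N}\times[0,T).
\]

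Next I would argue by contradiction. Suppose $u^{\varepsilon}\to u$ locally uniformly on $\mathbf{R}^{N}\times[0,T)$, and fix $(x_{0},t_{0})$ with $t_{0}\in(0,T)$. By continuity of $\sigma$ on the compact torus $\mathbf{T}^{N}$ we can choose $y^{*},y_{*}\in\mathbf{T}^{N}$ with $\sigma(y^{*})=\overline{\sigma}$ and $\sigma(y_{*})=\underline{\sigma}$. Since the shifted lattice $\varepsilon y^{*}+\varepsilon\mathbf{Z}^{N}$ is $\varepsilon\sqrt{N}$-dense in $\mathbf{R}^{N}$, we can pick a sequence $x^{\varepsilon}\to x_{0}$ with $\sigma(x^{\varepsilon}/\varepsilon)=\overline{\sigma}$. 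The upper barrier gives $u^{\varepsilon}(x^{\varepsilon},t_{0})\le u_{0}(x^{\varepsilon})-\overline{\sigma}m_{0}t_{0}$, and passing to the limit (combining local uniform convergence with continuity of $u_{0}$) yields
\[
u(x_{0},t_{0})\le u_{0}(x_{0})-\overline{\sigma}m_{0}t_{0}.
\]
The symmetric choice of $\tilde x^{\varepsilon}\to x_{0}$ with $\sigma(\tilde x^{\varepsilon}/\varepsilon)=\underline{\sigma}$ combined with the lower barrier produces $u(x_{0},t_{0})\ge u_{0}(x_{0})-\underline{\sigma}t_{0}$. Subtracting the two inequalities and dividing by $t_{0}>0$ forces $\overline{\sigma}m_{0}\le\underline{\sigma}$, contradicting the hypothesis.

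The main technical hurdle is the barrier construction in the first step, since $\sigma$ is only continuous and the candidate barriers are not $C^{1}$ in $x$, so one cannot simply plug them in classically. The trick is that the affine time-dependence of $v_{1}^{\varepsilon}$ and $v_{2}^{\varepsilon}$ determines $\phi_{t}$ at the contact point regardless of the behaviour in $x$, so the viscosity inequality collapses to the trivial monotonicity $m_{0}\le m(r)\le 1$ and no derivative of $\sigma$ is ever needed. I also observe that the argument refutes locally uniform convergence along every subsequence of $\{u^{\varepsilon}\}$, not merely the full family, because the choices of $x^{\varepsilon},\tilde x^{\varepsilon}$ can be made adapted to any prescribed $\varepsilon\to 0$.
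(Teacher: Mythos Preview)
Your proof is correct and follows essentially the same approach as the paper: the same explicit barriers $u_{0}(x)-\sigma(x/\varepsilon)t$ and $u_{0}(x)-\sigma(x/\varepsilon)m_{0}t$, the same comparison sandwich, and the same oscillation mechanism to rule out a limit. The only cosmetic difference is that the paper packages your final sequence argument into half-relaxed limits, computing $\underline{u}_{+}(x,t)=u_{0}(x)-\overline{\sigma}m_{0}t$ and $\overline{u}_{-}(x,t)=u_{0}(x)-\underline{\sigma}t$ directly to conclude $\underline{u}<\overline{u}$, rather than assuming a limit and reaching a contradiction; the content is identical.
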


\begin{proof}
Set
\begin{equation*}
u_{-}^{\varepsilon}(x,t):=u_{0}(x)-\sigma\left(\frac{x}{\varepsilon}\right)t,\quad u_{+}^{\varepsilon}(x,t):=u_{0}(x)-\sigma\left(\frac{x}{\varepsilon}\right)m_{0}t
\end{equation*}
for $(x,t)\in\mathbf{R}^{N}\times[0,T)$. 
Then, we see that $u_{-}^{\varepsilon}$ and $u_{+}^{\varepsilon}$ are a subsolution and a supersolution of (HJ$_{\varepsilon}$) respectively. 
By the comparison principle, the solution $u^{\varepsilon}$ satisfies
\begin{equation}\label{eq:est3}
u_{-}^{\varepsilon}(x,t)\le u^{\varepsilon}(x,t)\le u_{+}^{\varepsilon}(x,t)
\end{equation}
for all $(x,t)\in\mathbf{R}^{N}\times[0,T)$ and $\varepsilon>0$.

Let $\overline{u}$ and $\underline{u}$ be the upper half-relaxed limit and the lower half-relaxed limit of $u^{\varepsilon}$, respectively. 
Moreover, let $\overline{u}_{-}$ and $\underline{u}_{+}$ be the upper half-relaxed limit of $u_{-}^{\varepsilon}$ and the lower half-relaxed limit of $u_{+}^{\varepsilon}$, respectively. 
Then, we have
\begin{equation*}
\overline{u}_{-}(x,t)=u_{0}(x)-\underline{\sigma}t\quad{\rm and}\quad\underline{u}_{+}(x,t)=u_{0}(x)-\overline{\sigma}m_{0}t.
\end{equation*}
Thus, by \eqref{eq:est3} and the assumption $\overline{\sigma}m_{0}>\underline{\sigma}$, we have
\begin{equation}\label{eq:est4}
\underline{u}(x,t)\le\underline{u}_{+}(x,t)<\overline{u}_{-}(x,t)\le\overline{u}(x,t)
\end{equation}
for all $(x,t)\in\mathbf{R}^{N}\times[0,T)$. 
Therefore, $\overline{u}$ and $\underline{u}$ are different and so we conclude that $u^{\varepsilon}$ does not  converge to any functions locally uniformly as $\varepsilon\to0$.
\end{proof}

%The following figure show the relation between the solution $u^{\varepsilon}$ (real line), a supersolution $u^{\varepsilon}_{+}$ (the above dot line) and a subsolution $u^{\varepsilon}_{-}$ (the below dot line) when $u_{0}\equiv 0$. By the positional relation between $\overline{\sigma}m_{0}$ and $\underline{\sigma}$, it can be observed that $u^{\varepsilon}$ must oscillate as $\varepsilon\searrow0$.

%\begin{figure}[htbp]
%\begin{center}
%  \def \svgwidth{0.5\textwidth}
%   \input{image11.eps_tex}
%  \end{center}
%\end{figure}

\begin{remark}
When $\overline{\sigma}m_{0}=\underline{\sigma}$, we do not know whether or not $u^{\varepsilon}$ has a limit as $\varepsilon\to0$. However, by \eqref{eq:est4}, we see that the limit of $u^{\varepsilon}$ should be $u_{0}(x)-\underline{\sigma}t(=u_{0}(x)-\overline{\sigma}m_{0}t)$ if it exists.
\end{remark}

%--------------------------------------
%             Section 6
%--------------------------------------
\section{Generalization}

Our homogenization results can be extended for more general equations of the form
\begin{equation}
u^{\varepsilon}_t(x,t)
+H \left( x,\frac{x}{\varepsilon}, 
u^{\varepsilon}(x,t),D u^{\varepsilon}(x,t) \right)=0 
\quad \mbox{in} \ \mathbf{R}^N \times (0,T).
\label{gHJe}
\end{equation}
Here 
$H=H(x,y,u,p):\mathbf{R}^N \times \mathbf{T}^N \times \mathbf{R} \times \mathbf{R}^N \to \mathbf{R}$
is Lipschitz continuous in 
$\mathbf{R}^N \times \mathbf{T}^N \times (-L, L) \times B(0,L)$
for every $L>0$
and non-decreasing in $u$.
They guarantee the comparison principle;
similar assumptions can be seen in \cite{E}.
The corresponding cell problem is 
\begin{equation}
H \left( x,y,u,D v(y)+P \right) = a
\quad \text{in $\mathbf{T}^N$,}
\label{gCP}
\end{equation}
where the unknown is $(v,a) \in \mathrm{Lip}(\mathbf{T}^N) \times \mathbf{R}$
and $(x,u) \in \mathbf{R}^N \times \mathbf{R}$ is fixed.
Define $\mathcal{D}_{x,u}$ as the set of $P \in \mathbf{R}^N$
such that \eqref{gCP} admits a solution $(v,a)$ for a given $(x,u)$.
For homogenization of \eqref{gHJe} we assume either
\[ (1) \ \ \mathcal{D}_{x,u}=\mathbf{R}^N \ \mbox{for all $(x,u) \in \mathbf{R}^N \times \mathbf{R}$}
\quad \mbox{or} \quad 
(2) \ \ \sup_{\varepsilon>0} \mathrm{Lip}[u^{\varepsilon}] < \infty. \]

Choose $H_n(x,y,u,p) =\max\{ H(x,y,u,p), |p|-n \}$,
which is a coercive Hamiltonian approximating $H$.
Since $(x,u)$ is fixed in cell problems, 
a similar method in this paper gives 
a generalized effective Hamiltonian
$\bar{H}_{\infty}(x,u,P)$ as the limit of $\bar{H}_n(x,u,P)$.
(Here we do not pursue generalization of approximation to $H$
and study only a homogenization problem.
Also, in this case $\bar{H}_{\infty}$ is just the infimum of $\bar{H}_n$.)
According to \cite[Lemma 2.2]{E},
$\bar{H}_n$ possesses the same regularity and monotonicity properties as $H_n$,
and thus so is $\bar{H}_{\infty}$.
Moreover, since $\bar{H}_n$ is monotone in $n$, 
Dini's lemma ensures that $\bar{H}_n$ converges to $\bar{H}_{\infty}$ locally uniformly.
% Homogenization results for \eqref{gHJe} now follow
% since we have what are needed for three proofs of Theorem 5.1 above. 
One is now able to show homogenization results for \eqref{gHJe} with the same argument as in three proofs of Theorem \ref{thm:hom} above.

\section*{Acknowledgments}
The authors thank the anonymous referee for his or her careful reading of the manuscript and valuable comments.
The work of the first author was supported by Grant-in-aid for Scientific Research of JSPS Fellows No.\ 23-4365 and No.\ 26-30001.
The work of the second author was supported by a Grant-in-Aid for JSPS Fellows No.\ 25-7077 and the Program for Leading Graduate Schools, MEXT, Japan.
The work of the third author was supported by the Program for Leading Graduate Schools, MEXT, Japan.

%\label

%% The Appendices part is started with the command \appendix;
%% appendix sections are then done as normal sections
%% \appendix

%% \section{}
%% \label{}

%% If you have bibdatabase file and want bibtex to generate the
%% bibitems, please use
%%
%%  \bibliographystyle{elsarticle-harv} 
%%  \bibliography{<your bibdatabase>}

%% else use the following coding to input the bibitems directly in the
%% TeX file.

\end{document}